\newtheorem{definition}{Definition}
\newtheorem{theorem}{Theorem}[section]
\newtheorem{lemma}{Lemma}[section]
\newtheorem{proposition}{Proposition}[section]
\newtheorem{corollary}{Corollary}[section]
\newtheorem{example}{Example}[section]
\renewcommand{\&}{\textup{\symbol{`\&}}}
\begin{document}

\title{\bf  On Dynamical System and Topological Transitivity via Ideals}
\author{Chhapikul Miah$^{a}$ and Shyamapada Modak$^{b}$ \\ $^{a}$ Sukanta Mahavidyalaya, Dhupguri, Jalpaiguri 735210, West Bengal, India.\\  $^{b}$Department of  Mathematics \\ University of Gour Banga, Malda 732 103, West Bengal, India.
India \\ E-mail:$^{a}$ chhapikul.77@gmail.com, $^{b}$ spmodak2000@yahoo.co.in,\\ Orcid Id: $^{a}$0000-0002-4965-1758, $^{b}$ 0000-0002-0226-2392 }
\date{}
\maketitle


\begin{abstract}
    \noindent
   This paper will discuss the problem of defining the new topological transitivity. To do this several equivalent topological transitivity and non-wandering point has been discussed through this paper. This paper also consider the ideal version of transitivity with the help of the amendment of the result Remark $6.9(2)$ of \cite{LL2013}.  Corrected version of the Remark: ``If $\mathcal{\bf I}$ is codense, then $\mathcal{\bf I}$-denseness, $*$-denseness and denseness are equivalent"  will be ``If $\mathcal{\bf I}$ is  completely codense, then $\mathcal{\bf I}$-denseness, $*$-denseness and denseness are equivalent".
\end{abstract}


{\bf Keywords:} transitive, ideal-transitive, complete metric space, dense orbit, $\mathcal{\bf I}$-dense orbit.\\

{\bf 2020 AMS Subject Classifications:} Primary: 54B20; Secondary: 54F15, 54C05, 54C10.


\section{Introduction}\label{sec1}

Let $X$ be a space (metric space or topological space). Then a continuous map
$f: X\rightarrow X$ together with the $X$ (i.e., (X, f)) is called a dynamical system. As we know $f\circ f (i.e., f^{2})$ is also continuous when $f$ is continuous. This point of view, the characteristics of the set $\{x_0, f(x_0), f^{2}(x_0), f^{3}(x_0), . . . ,\}$ when $x_0\in X$ is an important phenomenon. In literature, $\{x_0, f(x_0), f^{2}(x_0), f^{3}(x_0), . . . ,\}$ is called the orbit\cite{GDB1927} of $x_0$. If it is dense in $X$, then it is called dense orbit\cite{SL1997} of $x_0$. For the study of the dense property of the set $\{x_0, f(x_0), f^{2}(x_0), f^{3}(x_0), . $ $ .. ,\}$ where $x_0\in X$, $f^{n}(U)\cap V\neq\varnothing$, for any non-empty open sets $U$ and $V$ in $X$ is remarkable condition. This was introduced in literature by the name of topological transitive\cite{SL1997}. The authors Kolyada and Snoha have studied it at various angles. For this study non-wandering point\cite{LSBC1991} is another noteworthy part. For further studies, the papers \cite{AMN2009, GDB1950,GDB1927, LSBC1991, SM2011, SM2012, SW1970, RV1960} and others are relevant. In view of the above mentioned points, we are investigating the answer of the following questions through this paper:
\begin{itemize}
\item Does any set characterize the topological transitive$?$
\item For any codense ideal $\mathcal{\bf I}$, are the collection of dense sets and collection of $\mathcal{\bf I}$ dense sets coincident$?$
\item Is the iteration $\{x_0, f(x_0), f^{2}(x_0), f^{3}(x_0), . . . ,\}$  $\mathcal{\bf I}$-dense (respectively, $*$-dense)  for any $x_0\in X$ $?$
    \item Under what condition(s) non-wandering points and ideal related non-wandering points are equal$?$
\end{itemize}
To solve the above questions, we will  amend a result of \cite{LL2013}.

\section{Definitions and Notations}\label{sec2}

\begin{itemize}
  \item $\mathbb{R}$ and $\mathbb{Q}$ denote the set of reals and set of rationals respectively.
  \item $I$ be a countable set.
  \item $X$ be a topological space.
  \item $\mathcal{B}$ be a basis for the topological space $X$. Members of $\mathcal{B}$ are called basic open sets.
\item Throughout this paper, for a topological space $(X,\tau)$, we denote `$Cl$' and `$Int$' as the closure operator and interior operator respectively.
  \item If $X$ is a topological space and ${\bf A}\subseteq X$, then ${\bf A}$ is said to be semi-open\cite{NL1963} (respectivey, preopen\cite{AMN1982}, $b$-open\cite{DA1996} and $\beta$-open\cite{AEM1983}) if ${\bf A}\subseteq Cl(Int({\bf A}))$ (respectively, ${\bf A}\subseteq Int(Cl({\bf A}))$, ${\bf A}\subseteq Cl(Int({\bf A}))\cup Int(Cl({\bf A}))$ and ${\bf A}\subseteq Cl(Int(Cl({\bf A})))$). The set of all semi-open sets (respectively, preopen sets, $b$-open sets and $\beta$-open sets) in a topological space $(X,\tau)$ is denoted as $SO(X)$ (respectively, $PO(X)$, $BO(X)$ and ${\beta}O(X)$), then the relations $\tau\subseteq SO(X)\subseteq BO(X)\subseteq {\beta}O(X)$ and $\tau\subseteq PO(X)\subseteq BO(X)\subseteq {\beta}O(X)$ hold. An open set ${\bf O}$ in a topological space $X$ is said to be regular open\cite{SW1970} if ${\bf O}=Int(Cl({\bf O}))$.

 \item  A subset ${\bf A}$ of a topological space $X$ is called a dense set\cite{SW1970} in $X$ if  ${Cl(\bf A)}=X$. Equivalently ${\bf A}$ is dense in $X$ if and only if ${\bf A}$ intersects every non-empty open subset of $X$.

 \item For every pair of non-empty open sets $U$, $V$ in $X$, there exists a positive integer $n$ such that $f^{n}(U)\cap V\neq\varnothing$, then $(X, f)$ is called topological transitive \cite{SL1997}. We also say that $f$ itself
      is topologically transitive if no misunderstanding can arise.

 \item A dynamical system $(X, f)$ is called  $K$-topologically transitive if for every pair of non-empty $K$-open sets $A$ and $B$, there exists a positive integer $n$ such that $f^{n}(A)\cap B\neq\varnothing$. In this case, $K$ may be the semi-open set \cite{NL1963}, preopen set \cite{AMN1982}, $b$-open set \cite{DA1996}, $\beta$-open set \cite{AEM1983}.

 \item A point $x\in X$ is said to be non-wandering if every neighbourhood $U$ of $x$ there exists a positive integer $n$  such that $f^{n}(U)\cap U\neq\varnothing$ \cite{LSBC1991}. The collection of all non-wandering points of $f$ will be denoted by $\Omega_{f}(x)$.

  \item A point $x\in X$ is said to be semi (respectively, pre, $b$, $\beta$) non-wandering  if every semi(respectively, pre, $b$, $\beta$) open set $U$ of $x$ there exists a positive integer $n$  such that $f^{n}(U)\cap U\neq\varnothing$.

  \item A point $x\in X$ ``moves,'' its trajectory being the sequence $x, f(x), f^{2}(x), f^{3}(x), . . . ,$ where $f^{n}$ is the $n^{th}$ iteration of $f$ . The point $f^{n}(x)$ is the position of $x$ after $n$ units of time. The set of points of the trajectory of $x$ under $f$ is called the orbit of $x$, denoted by $orb_{f}(x)$(or simply $(O_{f}(x))$)\cite{SL1997}.
  \item A dynamical system $(X, f)$ has a dense orbit if there exists a point $x\in X$ such that the orbit $\{x, f(x), f^{2}(x),f^{3}(x),...\}$ is dense in $X$ \cite{SL1997}.
   \item Any point with dense orbit is called a transitive point. Any point which is not transitive is intransitive \cite{SL1997}. The set of transitive or intransitive points of $(X, f)$ will be denoted by $tr(f)$ or $intr(f)$ respectively.

 \item Ideal: According to Kuratowski \cite{KK1966}, a collection $\mathcal{\bf I}\subseteq 2^{X}$ is called an ideal on $X$ if $\mathcal{\bf I}$ is closed under hereditary property and finite additivity property. If $X\notin\mathcal{\bf I}$, then $\mathcal{\bf I}$ is called proper ideal. Due to the ideal $\mathcal{\bf I}$ on a topological space $(X, \tau)$, Kuratowski's local function is, $A^{*}=\{x\in X:U_{x}\cap A\notin \mathcal{\bf I} $ $\forall U_{x}\in \tau(x)\}$, where $\tau(x)=\{U\in\tau: x\in U\}$. Its associated set valued set function \cite{SS2021} (or complementary operator is $\psi$ \cite{TN1986} and it is defined as, $\psi(A)=X\setminus(X\setminus A)^{*}$ \cite{TN1986}, we will write simply $\psi_{\tau}(M)$ or $\psi_{\tau}^{\mathcal{\bf I}}(M)$). $Cl^{*}(A)=A\cup A^{*}$ determines a Kuratowski closure operator for a topology $\tau^{*}(\mathcal{\bf I})=\{A\subset X:Cl^{*}(X\setminus A)=X\setminus A\}$ \cite{JH1990} or simply $\tau^{*}$, called the $*$-topology, which is finer than $\tau$. The member of $\tau^{*}(\mathcal{\bf I})$ is called $*$-open set and the compliment of $*$-open set is called $*$-closed set. $A\subset X$ is $*$-open if and only if $Int^*(A)=A$ where $Int^*(A)$ is denoted interior of $A$ with respect to $*$-topology and $A\subset X$ is $*$-closed if and only if $Cl^*(A)=A$ where $Cl^*(A)$ is denoted closure of $A$ with respect to $*$-topology.  A basis for the $*$-topology $\tau^{*}(\mathcal{\bf I})$ is ${\bf {\beta}}=\{O\setminus I_1:O\in\tau, I_1\in \mathcal{\bf I}\}$\cite{JH1990}.
\item For the condition $\tau\cap \mathcal{\bf I}=\{\varnothing\}$ of an ideal topological space $(X, \tau, \mathcal{\bf I})$, we say that $\mathcal{\bf I}$ is $\tau$-boundary\cite{RLN1967}(or codense\cite{DGR1999}) and this type of space is called Hayashi-Samuel space \cite{DJ1999}.
 \item According to Nj{\aa}stad, the ideal $\mathcal{\bf I}$ is said to be compatible with $\tau$, denoted by $\mathcal{\bf I}\sim \tau$ \cite{ON1966} if the following holds for every $A\subseteq X$: if for all $x\in A$, there exists $O\in\tau(x)$  where $\tau(x)=\{O\in \tau:x\in U\}$ such that $O\cap A\in \mathcal{\bf I}$, then $A\in \mathcal{\bf I}$.
\item For $\mathcal{\bf I}\sim \tau$, $\psi(\psi(A))=\psi(A)$ \cite{HJ1990} and $\psi(A)\setminus A\in \mathcal{\bf I}$  \cite{HJ1990} for every $A\subseteq X$.

  \item A subset $\bf A$ of a topological space $(X, \tau)$ with an ideal $\mathcal{\bf I}$ is called $\mathcal{\bf I}$-dense if and only if ${\bf A}^{*}(\mathcal{\bf I})=X$ \cite{DGR1999}. Equivalently ${\bf A}$ is $\mathcal{\bf I}$-dense in $X$ if and only if for every non-empty open subset ${\bf O}$ of $X$, ${\bf A}\cap {\bf O}\notin \mathcal{\bf I}$. Every $\mathcal{\bf I}$-dense subset of a topological space is a dense subset.  A subset $\bf A$ of a topological space $(X, \tau)$ with an ideal $\mathcal{\bf I}$ is  $\mathcal{\bf I}$-dense if and only if $\psi(X\setminus A)=\varnothing$.

  \item A subset $\bf A$ of a topological space $(X, \tau)$ with an ideal $\mathcal{\bf I}$ is called $*$-dense if and only if ${Cl}^{*}(A)=X$ \cite{EH1964}. Equivalently ${\bf A}$ is $*$-dense in $X$ if and only if for every non-empty $*$-open subset ${\bf O}$ of $X$, ${\bf A}\cap {\bf O}\neq\varnothing$. Every $\mathcal{\bf I}$-dense subset of $X$ is a $*$-dense but reverse may not be true\cite{LL2013}.
   \item  An ideal $\mathcal{\bf I}$ on a topological space is called a completely codense if $PO(X,\tau)\cap\mathcal{\bf I}=\{\varnothing\}$\cite{DGR1999}.
 \item An ideal $\mathcal{\bf I}$ is completely codense on $(X,\tau)$ if and only if each member of $\mathcal{\bf I}$ is nowhere dense \cite{DGR1999}.
  \item A subset $\bf A$ of a topological space $(X, \tau)$ with an ideal $\mathcal{\bf I}$ is called $\mathcal{\bf I}$-open if and only if ${\bf A}\subset Int({\bf A}^*)$. The collection of all  $\mathcal{\bf I}$-open\cite{AEA1992} sets in an ideal topological space $(X, \tau,\mathcal{\bf I})$ is denoted by $\mathcal{\bf I}O(X)$.
   \item Let $f:X \rightarrow Y$ be a function. If $\mathcal{\bf I}$ is an ideal on $X$, then $f(\mathcal{\bf I})=\{f(\mathcal{I}_{1}):\mathcal{I}_{1}\in \mathcal{\bf I} \}$ is also an ideal on $Y$ \cite{DJ1992}. Moreover $f^{\leftarrow}(\mathcal{\bf I})=\{A:A\subset f^{-1}(I)\in f^{-1}(\mathcal{\bf I})\}$ is an ideal on $X$\cite{JS2023}.

    \item $A\subseteq  X$ is invariant if $f(A)\subseteq A$ \cite{SL1997}.
    \item Throughout this paper we will write $\mathcal{\bf I }$-space instead of  ideal topological space and space instead of topological space if no misunderstanding can arise.
\end{itemize}

\section{Equivalent definition of topological  transitivity}\label{sec3}

This section will discuss that which collections of a topological space are not suitable for defining new topological transitive. That is, we discuss equivalent definition of topological transitivity.\\

\begin{lemma}
\label{lem.tct.3.1}
Let $(X, f)$ be a dynamical system and $\mathcal{B}$ be a basis of the topological space $X$. Then the following are equivalent statements:\\
$(i)$ $f$ is topological transitive;\\
$(ii)$ for every pair of non-empty open sets $A$ and $B$, there exists a positive integer $n$ such that $Cl(f^{n}(A))\cap B\neq \varnothing$;\\
$(iii)$ for every pair of basic open sets $B_{1}, B_{2}\in \mathcal{B}$, there exists a positive integer $n$ such that $f^{n}(B_{1})\cap B_{2}\neq \varnothing$.
\end{lemma}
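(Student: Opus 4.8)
The plan is to establish all three equivalences through a short cycle of implications, $(i) \Rightarrow (ii) \Rightarrow (iii) \Rightarrow (i)$, each step resting on one elementary observation. Two facts do all the work: first, that $f^{n}(A) \subseteq Cl(f^{n}(A))$ for every set $A$, so replacing a forward image by its closure can only make an intersection easier to achieve; and second, the neighbourhood characterization of closure, namely that any point lying in $Cl(S)$ has every one of its open neighbourhoods meeting $S$. No continuity, metrizability, or separation hypothesis is needed beyond the definitions of $Cl$ and of a basis $\mathcal{B}$.

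For $(i) \Rightarrow (ii)$ I would simply note that if some $n$ gives $f^{n}(A) \cap B \neq \varnothing$, then the same $n$ gives $Cl(f^{n}(A)) \cap B \supseteq f^{n}(A) \cap B \neq \varnothing$; this direction is immediate from the inclusion $f^{n}(A) \subseteq Cl(f^{n}(A))$. For $(ii) \Rightarrow (iii)$ I take two basic open sets $B_{1}, B_{2} \in \mathcal{B}$, which are in particular open, so $(ii)$ supplies an $n$ with $Cl(f^{n}(B_{1})) \cap B_{2} \neq \varnothing$. Choosing a point $y$ in that intersection, I observe that $y \in B_{2}$ and $B_{2}$ is open, so $B_{2}$ is an open neighbourhood of $y$; since $y \in Cl(f^{n}(B_{1}))$, this neighbourhood must meet $f^{n}(B_{1})$, whence $f^{n}(B_{1}) \cap B_{2} \neq \varnothing$. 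This is the only step in which openness of the second set is genuinely exploited, and it is the crux of the argument: it upgrades ``the closure touches $B_{2}$'' into ``the image actually meets $B_{2}$.''

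Finally, for $(iii) \Rightarrow (i)$ I take arbitrary non-empty open sets $U, V$. Because $\mathcal{B}$ is a basis, I can select non-empty basic open sets $B_{1} \subseteq U$ and $B_{2} \subseteq V$. Applying $(iii)$ yields an $n$ with $f^{n}(B_{1}) \cap B_{2} \neq \varnothing$, and since $B_{1} \subseteq U$ forces $f^{n}(B_{1}) \subseteq f^{n}(U)$ by monotonicity of the forward image, together with $B_{2} \subseteq V$ we get $f^{n}(U) \cap V \supseteq f^{n}(B_{1}) \cap B_{2} \neq \varnothing$. This closes the cycle and delivers all three equivalences. The only point requiring a moment's care is the $(ii) \Rightarrow (iii)$ passage from a closure-intersection to a genuine intersection; everything else is inclusion-chasing, so I would present that step explicitly and treat the remaining inclusions as routine.
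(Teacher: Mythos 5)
Your proof is correct. The paper states this lemma without supplying any proof, so there is nothing to compare against; your cyclic argument $(i)\Rightarrow(ii)\Rightarrow(iii)\Rightarrow(i)$ is the standard one and each step is sound --- in particular the key passage in $(ii)\Rightarrow(iii)$, where you use that $B_{2}$ is an open neighbourhood of any point of $Cl(f^{n}(B_{1}))\cap B_{2}$ and hence must meet $f^{n}(B_{1})$ itself, is exactly the right observation. The only caveat is one of reading rather than of your argument: statement $(iii)$ must be understood as quantifying over \emph{non-empty} basic open sets (otherwise it is vacuously false whenever $\varnothing\in\mathcal{B}$), and you implicitly adopt this convention both when applying $(ii)$ to $B_{1},B_{2}$ and when selecting non-empty basic sets inside $U$ and $V$; it would be worth stating that convention explicitly.
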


\begin{lemma}
\label{lem.tct.3.2}
Let $(X, f)$ be a dynamical system. Then the following statements are equivalent:\\
$(i)$  $f$ is topological semi (respectively, pre, $b$, $\beta$) transitive;\\
$(ii)$ for every pair of non-empty semi-open (respectively, preopen, $b$-open, $\beta$-open) sets $A$ and $B$, there exists a positive integer $n$ such that $ scl(f^{n}(A))\cap B ( \text{respectively}, pcl(f^{n}(A))\cap B, bcl(f^{n}(A))\cap B, \beta-cl(f^{n}(A))\cap B)\neq \varnothing$.
\end{lemma}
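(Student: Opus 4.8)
The plan is to mirror the equivalence $(i)\Leftrightarrow(ii)$ of Lemma \ref{lem.tct.3.1}, treating the four cases (semi, pre, $b$, $\beta$) uniformly. Let $K$ denote any one of these, let $\mathcal{K}$ be the corresponding family ($SO(X)$, $PO(X)$, $BO(X)$ or $\beta O(X)$), and let $K\text{-}cl$ be the associated closure operator ($scl$, $pcl$, $bcl$ or $\beta\text{-}cl$). The single structural fact I would isolate first is that each of these four families is closed under arbitrary unions; from this the neighbourhood characterisation of the $K$-closure follows, namely that $x\in K\text{-}cl(S)$ if and only if every member of $\mathcal{K}$ containing $x$ meets $S$. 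This characterisation is the engine of the whole argument, and because it holds identically for all four families, the four claims collapse into one.

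For $(i)\Rightarrow(ii)$ I would simply observe that it is immediate: given non-empty $A,B\in\mathcal{K}$, $K$-topological transitivity yields a positive integer $n$ with $f^{n}(A)\cap B\neq\varnothing$, and since $f^{n}(A)\subseteq K\text{-}cl(f^{n}(A))$ this forces $K\text{-}cl(f^{n}(A))\cap B\neq\varnothing$. No case distinction is needed at this step.

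For the converse $(ii)\Rightarrow(i)$ I would take non-empty $A,B\in\mathcal{K}$, invoke $(ii)$ to obtain $n$ with $K\text{-}cl(f^{n}(A))\cap B\neq\varnothing$, and fix a point $y$ in that intersection. The decisive move is that $B$ itself is a $K$-open neighbourhood of $y$, so applying the neighbourhood characterisation with $W=B$ and $S=f^{n}(A)$ gives $B\cap f^{n}(A)\neq\varnothing$, which is exactly the defining condition of $K$-topological transitivity, i.e. $(i)$. Since $A$ and $B$ were arbitrary, this closes the equivalence.

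The main obstacle is conceptual rather than computational: it lies in justifying the neighbourhood characterisation of $K\text{-}cl$ uniformly across all four families rather than re-deriving it four times. I expect to handle this by closure under arbitrary unions, which holds for each of $SO(X)$, $PO(X)$, $BO(X)$, $\beta O(X)$: the union $U$ of all $K$-open sets disjoint from $S$ is again $K$-open, its complement $X\setminus U$ is the smallest $K$-closed set containing $S$, and hence $x\in K\text{-}cl(S)=X\setminus U$ precisely when no $K$-open neighbourhood of $x$ misses $S$. Once this is in place the four cases require no separate calculation.
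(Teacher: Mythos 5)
Your proposal is correct, and in fact the paper states Lemma \ref{lem.tct.3.2} (like all four lemmas of Section 3) with no proof at all, so there is nothing of the authors' to compare against --- your argument fills a gap rather than duplicating or diverging from one. The forward direction is indeed immediate from $f^{n}(A)\subseteq K\text{-}cl(f^{n}(A))$, and the converse correctly hinges on the neighbourhood characterisation of the $K$-closure, which you rightly reduce to the closure of each of $SO(X)$, $PO(X)$, $BO(X)$ and $\beta O(X)$ under arbitrary unions; that closure property is standard for all four families, since each is defined by an inclusion $A\subseteq T(A)$ with $T$ monotone, so the smallest $K$-closed superset exists and the point $y\in K\text{-}cl(f^{n}(A))\cap B$ with $B$ itself $K$-open forces $f^{n}(A)\cap B\neq\varnothing$. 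The uniform treatment of the four cases is a genuine economy over verifying each closure operator separately.
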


\begin{lemma}
\label{lem.tct.3.3}
Let $(X, f)$ be a dynamical system and $\mathcal{B}$ be a basis of the topological space $X$. Then the following statements are equivalent:\\
$(i)$ $x$ is a non-wandering point of $X$;\\
$(ii)$ for every neighbourhood $M$ of  $x$, there exists a positive integer $n$ such that $Cl(f^{n}(M))\cap M\neq \varnothing$;\\
$(iii)$ for every  basic element  $B$ of $\mathcal{B}$ containing $x$, there exists a positive integer $n$ such that $Cl(f^{n}(B))\cap B\neq \varnothing$.
\end{lemma}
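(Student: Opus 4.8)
The plan is to establish the cyclic chain of implications $(i) \Rightarrow (ii) \Rightarrow (iii) \Rightarrow (i)$, which suffices for the three-way equivalence. Two of the three links are essentially immediate; the work is concentrated in the last.

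For $(i) \Rightarrow (ii)$ I take an arbitrary neighbourhood $M$ of $x$. By the definition of a non-wandering point there is a positive integer $n$ with $f^{n}(M) \cap M \neq \varnothing$, and since $f^{n}(M) \subseteq Cl(f^{n}(M))$, enlarging the first factor preserves non-emptiness, so $Cl(f^{n}(M)) \cap M \neq \varnothing$. For $(ii) \Rightarrow (iii)$, note that a basic element $B \in \mathcal{B}$ containing $x$ is in particular an open set containing $x$, hence a neighbourhood of $x$; applying $(ii)$ with $M = B$ gives the conclusion directly.

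The substantive direction is $(iii) \Rightarrow (i)$, and this is where I expect the main obstacle to lie, since it requires two passages: descending from an arbitrary neighbourhood to a basic open set via the basis $\mathcal{B}$, and then removing the closure operator from the hypothesis. Given a neighbourhood $U$ of $x$, I first choose an open set $O$ with $x \in O \subseteq U$, and then, because $\mathcal{B}$ is a basis, a basic element $B \in \mathcal{B}$ with $x \in B \subseteq O \subseteq U$. Condition $(iii)$ supplies a positive integer $n$ with $Cl(f^{n}(B)) \cap B \neq \varnothing$. The crucial step is to upgrade this to $f^{n}(B) \cap B \neq \varnothing$: choosing $y \in Cl(f^{n}(B)) \cap B$, the set $B$ is an open neighbourhood of $y$, so $y \in Cl(f^{n}(B))$ forces $B \cap f^{n}(B) \neq \varnothing$. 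Finally, the inclusion $B \subseteq U$ yields $f^{n}(B) \subseteq f^{n}(U)$, whence $f^{n}(U) \cap U \supseteq f^{n}(B) \cap B \neq \varnothing$, so $x$ is non-wandering.

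The only genuine subtlety is the closure-removal argument, which hinges essentially on the openness of $B$; the analogous passage would fail for an arbitrary set, which is precisely the reason the closure appears in statements $(ii)$ and $(iii)$ but not in the original definition underlying $(i)$. The auxiliary reductions (from a neighbourhood to a basic open subset, and the monotonicity $f^{n}(B) \subseteq f^{n}(U)$) are routine once this point is isolated.
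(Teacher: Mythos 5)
Your proof is correct: the chain $(i)\Rightarrow(ii)\Rightarrow(iii)\Rightarrow(i)$ is complete, and the key step --- removing the closure by picking $y\in Cl(f^{n}(B))\cap B$ and using that the open set $B$ is a neighbourhood of $y$ which must therefore meet $f^{n}(B)$ --- is exactly what makes the equivalence work. The paper states this lemma without any proof, so there is nothing to compare against; your argument is the natural one and fills that gap correctly.
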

\begin{lemma}

\label{lem.tct.3.4}
Let $(X, f)$ be a dynamical system. Then the following statements are equivalent:\\
$(i)$ $x$ is a semi (respectively, pre, $b$, $\beta$) non-wandering point of $X$;\\
$(ii)$ for every semi-open (respectively, preopen, $b$-open, $\beta$-open)set $M$ containing  $x$, there exists a positive integer $n$ such that $scl(f^{n}(M))\cap M (\text{respectively}, pcl(f^{n}(M))\cap M, bcl(f^{n}(M))\cap M, \beta-cl(f^{n}(M))\cap M)\neq \varnothing$.
\end{lemma}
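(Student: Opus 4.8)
The plan is to prove the two implications separately, carrying out the semi-open case in full and noting that the preopen, $b$-open and $\beta$-open cases are verbatim translations with $pcl$, $bcl$ and $\beta\text{-}cl$ in place of $scl$, so nothing new occurs in them. The whole argument runs parallel to that of Lemma~\ref{lem.tct.3.3}, the only difference being that the ordinary closure is replaced by the appropriate generalized closure.

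First I would dispose of $(i)\Rightarrow(ii)$, which is immediate: since every set is contained in its semi-closure, i.e. $f^{n}(M)\subseteq scl(f^{n}(M))$, any $n$ witnessing $f^{n}(M)\cap M\neq\varnothing$ in $(i)$ also witnesses $scl(f^{n}(M))\cap M\neq\varnothing$. The inclusions $A\subseteq pcl(A)$, $A\subseteq bcl(A)$ and $A\subseteq \beta\text{-}cl(A)$ handle the three remaining variants, so this direction costs nothing. The substance is $(ii)\Rightarrow(i)$. Here I would use the point-wise description of the generalized closure: for a set $A$, a point $y$ lies in $scl(A)$ if and only if every semi-open set containing $y$ meets $A$ (and likewise for $pcl$, $bcl$, $\beta\text{-}cl$ relative to $PO(X)$, $BO(X)$, $\beta O(X)$). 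Granting this, fix an arbitrary semi-open set $U$ containing $x$. Applying $(ii)$ to $U$ produces a positive integer $n$ and a point $y\in scl(f^{n}(U))\cap U$. But $U$ is itself a semi-open set containing $y$, and $y\in scl(f^{n}(U))$, so by the characterization $U$ must meet $f^{n}(U)$; that is, $f^{n}(U)\cap U\neq\varnothing$. As $U$ was an arbitrary semi-open neighbourhood of $x$, this says exactly that $x$ is a semi non-wandering point, which is $(i)$.

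The one point needing care — and the only place where anything beyond set-inclusion is used — is the closure characterization $y\in scl(A)\iff$ (every semi-open set containing $y$ meets $A$), together with its pre/$b$/$\beta$ analogues. I would supply the short verification behind it: put $S=\{y:\text{every semi-open set containing }y\text{ meets }A\}$, so that $X\setminus S=\bigcup\{W\in SO(X):W\cap A=\varnothing\}$; because $SO(X)$ is closed under arbitrary unions, $X\setminus S$ is semi-open, whence $S$ is semi-closed, contains $A$, and is contained in every semi-closed superset of $A$, so $S=scl(A)$. The same closure-under-arbitrary-unions property of $PO(X)$, $BO(X)$ and $\beta O(X)$ yields the corresponding identities, after which the four cases are identical. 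No metric, completeness or compactness hypothesis enters, exactly as in Lemma~\ref{lem.tct.3.3}.
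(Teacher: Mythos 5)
Your argument is correct: the forward direction follows from $A\subseteq scl(A)$ (and its pre/$b$/$\beta$ analogues), and the reverse direction correctly exploits the point-wise characterization of $scl$, whose justification via closure of $SO(X)$ (resp. $PO(X)$, $BO(X)$, $\beta O(X)$) under arbitrary unions you rightly supply. The paper states this lemma (like the others in Section~3) without proof, and your argument is exactly the evident intended one, parallel to Lemma~\ref{lem.tct.3.3}.
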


\section{Ideal version of topological transitive}\label{sec4}
In this section, we have introduced ideal version of transitivity called topologically ideal transitive and investigate its various characterizations and features. To do this we will modify the Remark $6.9(2)$ of \cite{LL2013}.
\begin{definition}
  \label{def.ncf.4.1}
   Let $(X,\tau,\mathcal{\bf I})$ be an ideal topological space. A dynamical system $(X, f)$  is called a topologically ideal transitive (or simply ideal transitive or $\mathcal{\bf I}$-transitive) if for every pair of non-empty open sets $A$ and $B$, there exists a positive integer $n$ such that $f^{n}(A)\cap B\notin \mathcal{\bf I}$.
\end{definition}

\begin{example}
  \label{exm.tit.4.0}
  Suppose $X=\mathbb{R}$ (set of all real numbers) is a set endowed with the co-finite topology $\tau$ and  $\mathcal{\bf I}=\mathcal{\bf I_c}$, ideal of all countable subsets of $\mathbb{R}$. Let us define a mapping $f:X \rightarrow X$ by $f(x)=x$, for all $x\in \mathbb{R}$. Then, $f$ is $\mathcal{\bf I}$-transitive.
\end{example}
\begin{example}
  \label{exm.tct.4.0.1}
  Suppose $X=\{a, b, c, d\}$ is a set endowed with the  topology $\tau=\{\varnothing, X, \{a\}, \{c\}, $ $\{a, c\}, \{a, c, d\}\}$ and $\mathcal{\bf I}=\{\varnothing, \{d\}\}$. Let us define a mapping $f:X \rightarrow X$ by $f(a)=c$, $f(b)=b$, $f(c)=a$ and  $f(d)=d$. Then $f$ is  $\mathcal{\bf I}$-transitive.
\end{example}
Every $\mathcal{\bf I}$-transitive is topologically transitive. But the converse may not be true. We consider the following examples:

\begin{example}
  \label{exm.tct.4.1}
  Suppose $X=\{a, b, c, d\}$ is a set endowed with the  topology $\tau=\{\varnothing, X, \{a\}, \{c\}, $ $\{a, c\}, \{a, c, d\}\}$ and $\mathcal{\bf I}=\{\varnothing, \{a\}, \{c\}, \{a, c\}\}$. Let us define a mapping $f:X \rightarrow X$ by $f(a)=c$, $f(b)=b$, $f(c)=a$ and  $f(d)=d$. Then $f$ is open, continuous and topologically transitive but not $\mathcal{\bf I}$-transitive.
\end{example}
\begin{example}
  \label{exm.tit.4.2}
  Suppose $X=\mathbb{R}$  is a set endowed with the  topology $\tau=\{\varnothing, \mathbb{R},(a,b), [a,b), [a,b]\}$ with $a< b$ and $a,b \in\mathbb{R}$ and $\mathcal{\bf I}=\wp([a,b])$ (power set of $[a, b]$). Let us define a mapping $f:X \rightarrow X$ by $f(x)=x$, for all $x\in \mathbb{R}$. Then, $f$ is topologically transitive but not $\mathcal{\bf I}$-transitive.
\end{example}

If $\mathcal{I}=\{\varnothing\}$, then  $\mathcal{\bf I}$-transitive and topologically transitive becomes identical.\\
For the reverse inclusion we consider the followings:
\begin{theorem}
  \label{thm.tit.4.1}
  Let $\mathcal{\bf I}$ be a codense ideal on a topological space $(X, \tau)$  and $(X, f)$ be a dynamical system. If $f$ is open and topologically  transitive, then it is $\mathcal{\bf I}$-transitive.
\end{theorem}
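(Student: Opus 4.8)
The plan is to reduce $\mathcal{\bf I}$-transitivity to ordinary transitivity by exploiting the two hypotheses separately: openness of $f$ guarantees that the relevant intersection is an \emph{open} set, and codenseness of $\mathcal{\bf I}$ guarantees that no non-empty open set lives in the ideal. Together these two facts upgrade ``$f^{n}(A)\cap B\neq\varnothing$'' to ``$f^{n}(A)\cap B\notin\mathcal{\bf I}$,'' which is exactly what the definition of $\mathcal{\bf I}$-transitivity demands.

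First I would fix an arbitrary pair of non-empty open sets $A$ and $B$ in $X$. The crucial preliminary observation is that $f^{n}(A)$ is open for every positive integer $n$. Since $f$ is open, $f(A)$ is open; applying openness again, $f^{2}(A)=f(f(A))$ is open; and by an easy induction on $n$ (using $f^{n}=f\circ f^{n-1}$) every forward image $f^{n}(A)$ is open. Consequently $f^{n}(A)\cap B$, being the intersection of two open sets, is itself open in $\tau$ for each $n$.

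Next I would invoke the hypothesis that $f$ is topologically transitive. By Definition of transitivity (equivalently Lemma~\ref{lem.tct.3.1}), there exists a positive integer $n$ with $f^{n}(A)\cap B\neq\varnothing$. Combining this with the previous paragraph, $f^{n}(A)\cap B$ is a \emph{non-empty open subset} of $X$, i.e.\ a non-empty member of $\tau$. Finally, because $\mathcal{\bf I}$ is codense we have $\tau\cap\mathcal{\bf I}=\{\varnothing\}$, so no non-empty open set can belong to $\mathcal{\bf I}$; in particular $f^{n}(A)\cap B\notin\mathcal{\bf I}$. As $A$ and $B$ were arbitrary non-empty open sets, this establishes that $f$ is $\mathcal{\bf I}$-transitive.

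I do not anticipate any genuine obstacle here—the argument is essentially a two-line observation once the right framing is in place. The only point requiring care is the inductive verification that openness of $f$ propagates to all iterates $f^{n}$; everything else is a direct appeal to the definitions of transitivity and codenseness. It is worth noting where each hypothesis is used, since the counterexamples (Examples~\ref{exm.tct.4.1} and \ref{exm.tit.4.2}) show the conclusion fails without codenseness, and openness is precisely what is needed to keep the image inside $\tau$ so that codenseness can be applied.
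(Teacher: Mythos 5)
Your proposal is correct and follows essentially the same route as the paper's own proof: openness of $f$ (propagated to $f^{n}$) makes $f^{n}(A)\cap B$ open, transitivity makes it non-empty, and codenseness then excludes it from $\mathcal{\bf I}$. Your explicit induction on $n$ for the openness of $f^{n}(A)$ is a small point of added care that the paper leaves implicit.
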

\begin{proof}
  By topologically transitivity of $f$ implies for every non-empty open sets $U$ and $V$, there exists a positive integer $k$ such that $f^{k}(U)\cap V\neq\varnothing$. Since $f$ is an open mapping, then $f^{k}(U)\cap V\in\tau$. Hence, by codenseness of $\mathcal{\bf I}$, $f^{k}(U)\cap V\notin\mathcal{\bf I}$. Thus, $f$ is $\mathcal{\bf I}$-transitive. This completes the proof.
\end{proof}
\begin{theorem}
  \label{thm.tit.4.7}
  Let $(X, \tau, \mathcal{\bf I})$ be an ideal topological space. If $(X,f)$ is $\mathcal{\bf I}$-transitive, then the space $(X, \tau, \mathcal{\bf I})$ is a Hayashi-Samuel space.
\end{theorem}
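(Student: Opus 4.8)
The plan is to argue by contradiction, exploiting the hereditary (downward-closed) property of the ideal together with the definition of $\mathcal{\bf I}$-transitivity applied to a single open set paired with itself. Recall that the space $(X,\tau,\mathcal{\bf I})$ is a Hayashi-Samuel space precisely when $\mathcal{\bf I}$ is codense, that is, $\tau\cap\mathcal{\bf I}=\{\varnothing\}$; since every ideal contains $\varnothing$, what actually must be shown is the sharper statement that no \emph{non-empty} open set belongs to $\mathcal{\bf I}$.

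First I would suppose, for contradiction, that there exists a non-empty open set $U\in\tau\cap\mathcal{\bf I}$. Taking the pair $(A,B)=(U,U)$ of non-empty open sets in Definition~\ref{def.ncf.4.1}, the $\mathcal{\bf I}$-transitivity of $f$ furnishes a positive integer $n$ with $f^{n}(U)\cap U\notin\mathcal{\bf I}$. The crucial choice is to feed the \emph{diagonal} pair $(U,U)$ into the transitivity hypothesis, rather than two distinct open sets.

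Next I would observe that $f^{n}(U)\cap U\subseteq U$, and since $U\in\mathcal{\bf I}$ and $\mathcal{\bf I}$ is closed under the hereditary property, every subset of $U$---in particular $f^{n}(U)\cap U$---must lie in $\mathcal{\bf I}$. This directly contradicts the conclusion of the previous step. Hence no non-empty open set can belong to $\mathcal{\bf I}$, giving $\tau\cap\mathcal{\bf I}=\{\varnothing\}$ and showing that $(X,\tau,\mathcal{\bf I})$ is a Hayashi-Samuel space.

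There is essentially no technical obstacle in this argument: it hinges on the single observation that transitivity forces some iterate-intersection $f^{n}(U)\cap U$ \emph{out} of the ideal, while heredity keeps every subset of an ideal member \emph{inside} the ideal. The only point requiring any care is recognizing that the containment $f^{n}(U)\cap U\subseteq U$ is what drives the contradiction, which is why the transitivity condition must be invoked on the pair $(U,U)$.
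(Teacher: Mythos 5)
Your proof is correct and follows essentially the same route as the paper: both arguments hinge on the containment $f^{n}(A)\cap U\subseteq U$ together with the hereditary property of $\mathcal{\bf I}$ to force $U\notin\mathcal{\bf I}$. The only (cosmetic) difference is that you feed the diagonal pair $(U,U)$ into the transitivity hypothesis, whereas the paper uses an arbitrary non-empty open $V$ paired with $U$; both choices work for exactly the reason you identify.
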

\begin{proof}
  Let $U(\neq\varnothing)$ be an open set in $X$. Then, for $\mathcal{\bf I}$-transitive of $(X,f)$, for any $V(\neq\varnothing)\in\tau$, there exists a positive integer $n$ such that $f^{n}(V)\cap U\notin\mathcal{\bf I}$. This implies $U\notin \mathcal{\bf I}$ (if $U\in\mathcal{\bf I}$, then $f^{n}(V)\cap U\in\mathcal{\bf I}$) and hence $(X, \tau, \mathcal{\bf I})$ is a Hayashi-Samuel space. This completes the proof.
\end{proof}

But the converse may not be true. we consider the following examples:
\begin{example}
  \label{exm.tit.4.3}
  Suppose $X=\mathbb{R}$  is a set endowed with the usual topology $\tau_{u}$  and consider an ideal $\mathcal{\bf I}=\mathcal{P}(\mathbb{R}\setminus\mathbb{Q})$ (power set of $\mathbb{R}\setminus\mathbb{Q}$). Let us define a mapping $f:X \rightarrow X$ by $f(x)=x$, for all $x\in \mathbb{R}$. If we take, $U=(0,1)$ and $V=(1,2)$ be two open subsets of $X$, then for any positive integer $n$, $f^{n}(U)\cap V\in\mathcal{\bf I}$.  Thus, $f$ is not $\mathcal{\bf I}$-transitive but the space $(X, \tau, \mathcal{\bf I})$ is a Hayashi-Samuel space.
\end{example}
\begin{example}
  \label{exm.tit.4.4}
  Suppose $X= \{0\}\cup\{\frac{1}{n} : n \in \mathbb{N}\}$ (where $\mathbb{N}$ denotes the set of all natural numbers) is a set endowed with the usual topology and consider an ideal $\mathcal{\bf I}=\{\varnothing, \{0\}\}$. Let us define a mapping $f:X \rightarrow X$ by $f(\frac{1}{n})=\frac{1}{(n+1)}$, $n=1, 2, 3,...$ and $f(0)=0$. Clearly $f$ is open and continuous. Take, $U=\{\frac{1}{2}\}, V=\{1\}$ be two open subsets of $X$. Then, for all $n\in \mathbb{N}$, $f^{n}(U)\cap V\in \mathcal{\bf I}$.  Thus, $f$ is not $\mathcal{\bf I}$-transitive but the space $(X, \tau, \mathcal{\bf I})$ is a Hayashi-Samuel space.
\end{example}

We learnt from various research papers that dense sets are important for discussing the properties of topological transitive. In this point of view to discuss the properties of ideal version of topological transitive, $\mathcal{\bf I}$-dense sets play an important role. Thus we mentioned that if the collection of dense sets and the collection of $\mathcal{\bf I}$-dense sets are equal, then the properties of $\mathcal{\bf I}$-transitive will not be interesting. Therefore it is necessary to amend the Remark $6.9(2)$ of \cite{LL2013},  ``{\bf $\mathcal{\bf I}$-dense, $*$-dense and dense are  equivalent if  the ideal $\mathcal{\bf I}$ is codense}". Followings are the  counter examples to show that the Remark $\bf 6.9(2)$ of {\bf \cite{LL2013}} is not true always. For this we consider the following examples:
\begin{example}
  Let us consider an ideal topological space $(X, \tau, \mathcal{\bf I})$ where $X=\{a, b, c, d\}$, $\tau=\{\varnothing, X, \{a, c, d\}\}$ and $\mathcal{\bf I}=\{\varnothing, \{a\}, \{c\}, \{a,c\} \}$.  Here $\mathcal{\bf I}\cap \tau=\{\varnothing\}$ and hence $\mathcal{\bf I}$ is codense. Take $A=\{a, b, c\}$. Then, $Cl(A)=X$. This implies $A$ is a dense subset of $X$. Also $A^{*}=\{b, c, d\}\neq X$ and $Cl^{*}(A)=A\cup A^{*}=\{a,b,c,d\}$. This shows that $A$ is $*$-dense but not $\mathcal{\bf I}$-dense. Thus $\mathcal{\bf I}$-dense, $*$ and dense are not equivalent though $\mathcal{\bf I}$ is codense.
\end{example}
\begin{example}
  \label{exm.tit.4.0.1}
  Suppose $X=\mathbb{R}$  is a set endowed with the co-finite topology $\tau$ and  $\mathcal{\bf I}=\mathcal{\bf I_c}$, ideal of all countable subsets of $\mathbb{R}$. Then $\mathcal{\bf I}\cap \tau=\{\varnothing\}$ and hence $\mathcal{\bf I}$ is codense. Take $A=\mathbb{Q}$. Then, $Cl(A)=\mathbb{R}$. This implies $A$ is a dense subset of $X$. Also $A^{*}=\varnothing\neq X$ and hence $Cl^*{A}=A\cup A^{*}=A\neq X$. This shows that $A$ is neither $\mathcal{\bf I}$-dense nor $*$-dense. Hence $\mathcal{\bf I}$-dense, $*$-dense and dense are not equivalent though $\mathcal{\bf I}$ is codense.
\end{example}
Above examples showed that `` $\mathcal{\bf I}$-dense, $*$-dense and dense are not equivalent" when $\mathcal{\bf I}$ is codense.\\
 Even if $\mathcal{\bf I}$ is codense and $\mathcal{\bf I}\sim\tau$, by the following example, we have shown that  $\mathcal{\bf I}$-dense, $*$-dense and dense are not equivalent:

\begin{example}
  \label{exm.tit.4.0.3}
  Suppose $X=\mathbb{R}$  is a set endowed with the topology $\tau=\{\varnothing, \mathbb{R}, \mathbb{Q}\cup \{i\}\}$ for some fixed $i\in \mathbb{R}\setminus \mathbb{Q}$ and  $\mathcal{\bf I}=\mathcal{P}(\mathbb{Q})$ (power set of $\mathbb{Q}$). Then $\mathcal{\bf I}\cap \tau=\{\varnothing\}$ and hence $\mathcal{\bf I}$ is codense. Also $\mathcal{\bf I}\sim \tau$.  Take $A=\mathbb{Q}$. Then, $Cl(A)=\mathbb{R}$. This implies $A$ is a dense subset of $\mathbb{R}$. Also $A^{*}=\varnothing\neq \mathbb{R}$ and hence $Cl^*{A}=A\cup A^{*}=A\neq \mathbb{R}$. This shows that $A$ is neither $\mathcal{\bf I}$-dense nor $*$-dense. Hence $\mathcal{\bf I}$-dense, $*$-dense and dense are not equivalent though $\mathcal{\bf I}$ is codense and $\mathcal{\bf I}\sim \tau$.
\end{example}
In this regards, we shall give a modification of the Remark $\bf 6.9(2)$ of {\bf \cite{LL2013}} in the next section.

\begin{theorem}
  \label{thm.tit.4.2}
  Let $X$ be an $\mathcal{\bf I}$-space and $(X, f)$ be a dynamical system. If $f$ is $\mathcal{\bf I}$-transitive. Then, for any non-empty open set $U$ in $X$, $\bigcup\limits_{n=0}^{\infty}f^{n}(U)$ is $\mathcal{\bf I}$-dense in $X$.
\end{theorem}

\begin{proof}
Let $f$ be an $\mathcal{\bf I}$-transitive map and assume that $\bigcup\limits_{n=0}^{\infty}f^{n}(U)$ is not $\mathcal{\bf I}$-dense in $X$. Then, there exists a non-empty open set $V$ in $X$ such that $(\bigcup\limits_{n=0}^{\infty}f^{n}(U)) \cap V\in \mathcal{\bf I}$.
This implies $f^{n}(U)\cap V\in\mathcal{\bf I}$ (by hereditary property of $\mathcal{\bf I}$) for all $n\in\mathbb{Z_{+}}\cup \{0\}$ where $\mathbb{Z_{+}}$ is the set of all positive integers. This leads a contradiction to the $\mathcal{\bf I}$-transitive of $f$. Hence $\bigcup\limits_{n=0}^{\infty}f^{n}(U)$ is $\mathcal{\bf I}$-dense in $X$. This completes the proof.
\end{proof}

But the reverse may not be true. For this, let $U$ and $V$ be two non-empty open sets in $X$ and assume that
 $\bigcup\limits_{n=0}^{\infty}f^{n}(U)$ is $\mathcal{\bf I}$-dense in $X$. Then, $\bigcup\limits_{n=0}^{\infty}f^{n}(U)\cap V\notin\mathcal{\bf I}$ i.e., $\bigcup\limits_{n=0}^{\infty}(f^{n}(U)\cap V)\notin\mathcal{\bf I}$ . This implies for all $m\in \mathbb{Z_{+}}\cup \{0\}$, $f^{m}(U)\cap V$ may  belongs to $\mathcal{\bf I}$. \\
 If we consider, $\mathcal{\bf I}$ of all proper subsets of $\mathbb{N}$ where $\mathbb{N}$ denotes the set of all natural numbers, then $\mathcal{\bf I}$ becomes an ideal. Here for all $n\in \mathbb{N}$, $\{n\}\in\mathcal{\bf I}$ but $\bigcup\limits_{n=1}^{\infty}\{n\}\notin\mathcal{\bf I}$.\\

We know that $\mathcal{\bf I}$-transitive implies $\mathcal{\bf I}$ is codense. Again in Hayashi-Samuel space, the collection of $\mathcal{\bf I}$-dense sets and the collection of dense sets in the $*$-topology are equal\cite{SM2007}. Therefore we get the following:
\begin{corollary}
  \label{cor.tit.4.1.0}
  Let $X$ be an $\mathcal{\bf I}$-space and $(X, f)$ be a dynamical system. If $f$ is $\mathcal{\bf I}$-transitive. Then, for any non-empty open set $U$ in $X$, $\bigcup\limits_{n=0}^{\infty}f^{n}(U)$ is $*$-dense in $X$.
\end{corollary}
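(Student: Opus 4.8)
The plan is to deduce the statement directly from Theorem \ref{thm.tit.4.2} together with the standing relationship between $\mathcal{\bf I}$-denseness and $*$-denseness recorded in Section \ref{sec2}. Recall that a set $A$ is $*$-dense exactly when $Cl^{*}(A)=X$, and that every $\mathcal{\bf I}$-dense subset of $X$ is automatically $*$-dense (the converse failing in general). Since Theorem \ref{thm.tit.4.2} already guarantees that $\bigcup_{n=0}^{\infty}f^{n}(U)$ is $\mathcal{\bf I}$-dense whenever $f$ is $\mathcal{\bf I}$-transitive, the desired $*$-denseness will then follow with essentially no further work; the corollary is really just the transfer of one denseness notion to the weaker one.

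In detail, the key steps in order are: (i) invoke Theorem \ref{thm.tit.4.2} to obtain $\left(\bigcup_{n=0}^{\infty}f^{n}(U)\right)^{*}(\mathcal{\bf I})=X$ for every non-empty open $U$; (ii) recall that $Cl^{*}(A)=A\cup A^{*}$ for any $A\subseteq X$, so taking $A=\bigcup_{n=0}^{\infty}f^{n}(U)$ and substituting $A^{*}=X$ from step (i) forces $Cl^{*}(A)=X$; and (iii) read off $*$-denseness from the definition. No extra hypothesis on $\mathcal{\bf I}$ is consumed at this level, since the implication ``$\mathcal{\bf I}$-dense $\Rightarrow$ $*$-dense'' is unconditional.

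Alternatively, one can route the argument through the Hayashi-Samuel structure, as the surrounding discussion suggests: Theorem \ref{thm.tit.4.7} shows that $\mathcal{\bf I}$-transitivity forces $(X,\tau,\mathcal{\bf I})$ to be a Hayashi-Samuel space, and in such a space the collection of $\mathcal{\bf I}$-dense sets coincides with the collection of sets dense in the $*$-topology by \cite{SM2007}; combining this with the $\mathcal{\bf I}$-denseness from Theorem \ref{thm.tit.4.2} again yields $*$-denseness, and in fact shows the two notions agree here. I do not anticipate a genuine obstacle, as this is a corollary; the only point demanding care is to apply the inclusion in the correct direction, namely that $\mathcal{\bf I}$-denseness is the \emph{stronger} property, so that $*$-denseness is read off rather than the generally false converse.
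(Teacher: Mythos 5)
Your proposal is correct, and your primary route is actually more economical than the one the paper uses. The paper derives this corollary by first noting (via Theorem \ref{thm.tit.4.7}) that $\mathcal{\bf I}$-transitivity forces $(X,\tau,\mathcal{\bf I})$ to be a Hayashi-Samuel space, and then invoking the fact from \cite{SM2007} that in such a space the $\mathcal{\bf I}$-dense sets coincide with the sets dense in the $*$-topology; combined with Theorem \ref{thm.tit.4.2} this yields $*$-denseness. Your main argument bypasses the Hayashi-Samuel detour entirely: since $A^{*}=X$ gives $Cl^{*}(A)=A\cup A^{*}=X$ unconditionally, the implication ``$\mathcal{\bf I}$-dense $\Rightarrow$ $*$-dense'' needs no hypothesis on the ideal, and the corollary follows from Theorem \ref{thm.tit.4.2} in one line. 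What the paper's route buys is the extra observation that the two denseness notions actually \emph{coincide} here (not merely that one implies the other), which you also note in your alternative paragraph; what your direct route buys is that it consumes no structure beyond the definition of $Cl^{*}$ and is visibly independent of codenseness. Both arguments are sound, and you correctly apply the implication in the right direction.
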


\begin{theorem}
  \label{thm.tit.4.3}
  Let $X$ be an $\mathcal{\bf I}$-space and $(X, f)$ be a dynamical system. Then $f$ is $\mathcal{\bf I}$-transitive  if for any non-empty open set $U$ in $X$, $\bigcup\limits_{n=0}^{\infty}f^{n}(U)$ is $\mathcal{\bf I}$-dense in $X$ and $\mathcal{\bf I}$ is countably additive.
\end{theorem}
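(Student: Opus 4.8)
The plan is to establish the contrapositive of the defining failure of $\mathcal{\bf I}$-transitivity directly from the $\mathcal{\bf I}$-denseness hypothesis, invoking countable additivity at the single point where the finite additivity of an ideal would not suffice. This statement is the converse of Theorem \ref{thm.tit.4.2}, and the discussion immediately preceding it (the ideal of all proper subsets of $\mathbb{N}$, in which each singleton $\{n\}$ lies in $\mathcal{\bf I}$ while $\bigcup_{n}\{n\}$ does not) already isolates why the extra countable-additivity hypothesis is unavoidable here.

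First I would fix an arbitrary pair of non-empty open sets $A$ and $B$ in $X$; the goal is to exhibit a positive integer $n$ with $f^{n}(A)\cap B\notin\mathcal{\bf I}$. By hypothesis the forward-orbit union $G=\bigcup_{n=0}^{\infty}f^{n}(A)$ is $\mathcal{\bf I}$-dense in $X$. Invoking the equivalent description of $\mathcal{\bf I}$-denseness recorded in Section \ref{sec2} (a set $S$ is $\mathcal{\bf I}$-dense precisely when $S\cap O\notin\mathcal{\bf I}$ for every non-empty open $O$) and applying it to the non-empty open set $B$, I obtain $G\cap B\notin\mathcal{\bf I}$.

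The decisive step is then to rewrite $G\cap B=\bigcup_{n=0}^{\infty}\bigl(f^{n}(A)\cap B\bigr)$ as a countable union and argue by contradiction: were every term $f^{n}(A)\cap B$ a member of $\mathcal{\bf I}$, countable additivity would place the entire union into $\mathcal{\bf I}$, contradicting $G\cap B\notin\mathcal{\bf I}$. Hence at least one term $f^{n}(A)\cap B$ fails to lie in $\mathcal{\bf I}$, which is exactly the defining condition of $\mathcal{\bf I}$-transitivity.

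I expect two points to require care. The conceptual heart is the irreplaceability of countable additivity: finite additivity controls only the partial unions $\bigcup_{n\le N}\bigl(f^{n}(A)\cap B\bigr)$, and the counterexample just above the theorem shows that a countable union can escape $\mathcal{\bf I}$ while every finite subunion stays inside it, so the hypothesis is genuinely used and cannot be dropped. The second, more bookkeeping, point is that the union is indexed from $n=0$ whereas the definition of $\mathcal{\bf I}$-transitivity asks for a \emph{positive} $n$; the argument as written produces only some index $n\ge 0$, and to land strictly inside the definition one should either adopt the same $n\ge 0$ reading already used in the proof of Theorem \ref{thm.tit.4.2} or supply a short extra step guaranteeing a positive witness. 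I would flag this explicitly rather than leave it implicit.
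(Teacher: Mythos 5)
Your proof is correct and follows essentially the same route as the paper's: apply the $\mathcal{\bf I}$-denseness of $\bigcup_{n= 0}^{\infty} f^{n}(A)$ to the open set $B$, rewrite the intersection as a countable union of the sets $f^{n}(A)\cap B$, and use countable additivity to extract a single index with $f^{n}(A)\cap B\notin\mathcal{\bf I}$. Your explicit flagging of the $n=0$ versus positive-$n$ index issue is a point the paper silently elides (its proof asserts the witness lies in $\mathbb{Z_{+}}$ when the argument only yields some $m\ge 0$), but this does not change the substance or the structure of the argument.
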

\begin{proof}
  let $U$ and $V$ be two non-empty open sets in $X$ and assume that
 $\bigcup\limits_{n=0}^{\infty}f^{n}(U)$ is $\mathcal{\bf I}$-dense in $X$. Then, $\bigcup\limits_{n=0}^{\infty}f^{n}(U)\cap V\notin\mathcal{\bf I}$. This implies there exists $m\in \mathbb{Z_{+}}$ such that $f^{m}(U)\cap V\notin\mathcal{\bf I}$, since $\mathcal{\bf I}$ is countably additive. Hence, $f$ is $\mathcal{\bf I}$-transitive. This completes the proof.
\end{proof}

  \begin{corollary}
  \label{cor.tit.4.1.1}
  Let $X$ be an $\mathcal{\bf I}$-space and $(X, f)$ be a dynamical system. Then $f$ is $\mathcal{\bf I}$-transitive  if for any non-empty open set $U$ in $X$, $\bigcup\limits_{n=0}^{\infty}f^{n}(U)$ is $*$-dense in $X$ and $\mathcal{\bf I}$ is countably additive.
\end{corollary}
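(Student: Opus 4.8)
The plan is to deduce this from Theorem \ref{thm.tit.4.3} by upgrading the hypothesis ``$\bigcup_{n=0}^{\infty}f^{n}(U)$ is $*$-dense'' to ``$\bigcup_{n=0}^{\infty}f^{n}(U)$ is $\mathcal{\bf I}$-dense''. Indeed, Theorem \ref{thm.tit.4.3} already produces $\mathcal{\bf I}$-transitivity from the $\mathcal{\bf I}$-dense version together with countable additivity, so the only genuinely new content is the passage from $*$-denseness to $\mathcal{\bf I}$-denseness. As recalled just before Corollary \ref{cor.tit.4.1.0}, in a Hayashi--Samuel space the collection of $\mathcal{\bf I}$-dense sets and the collection of $*$-dense sets coincide \cite{SM2007}; hence, once the ambient space is known to be Hayashi--Samuel, the two hypotheses are interchangeable and the corollary is immediate from Theorem \ref{thm.tit.4.3}.

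To make the mechanism explicit I would argue directly, mirroring the proof of Theorem \ref{thm.tit.4.3}. Fix non-empty open sets $U$ and $V$ and write $W=\bigcup_{n=0}^{\infty}f^{n}(U)$. Suppose, for contradiction, that $f^{m}(U)\cap V\in\mathcal{\bf I}$ for every $m\in\mathbb{Z_{+}}\cup\{0\}$. Since $\mathcal{\bf I}$ is countably additive, $W\cap V=\bigcup_{m}\bigl(f^{m}(U)\cap V\bigr)\in\mathcal{\bf I}$. Because $V\in\tau$ and $W\cap V\in\mathcal{\bf I}$, the set $V\setminus(W\cap V)=V\setminus W$ is a basic $*$-open set (of the form $O\setminus I_{1}$ with $O=V\in\tau$ and $I_{1}=W\cap V\in\mathcal{\bf I}$), and it is plainly disjoint from $W$. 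As $W$ is $*$-dense, it must meet every non-empty $*$-open set; therefore $V\setminus W=\varnothing$, i.e. $V\subseteq W$.

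The remaining case $V\subseteq W$ is where I expect the main obstacle to sit: it forces $V=W\cap V\in\mathcal{\bf I}$, so $V$ becomes a non-empty open member of $\mathcal{\bf I}$. This is contradictory \emph{exactly} when $\mathcal{\bf I}$ is codense (that is, $\tau\cap\mathcal{\bf I}=\{\varnothing\}$), equivalently when $(X,\tau,\mathcal{\bf I})$ is a Hayashi--Samuel space. Thus the hard point is not the countable-additivity bookkeeping but securing codenseness; I would therefore either include codenseness among the hypotheses---which is consistent with Theorem \ref{thm.tit.4.7}, since $\mathcal{\bf I}$-transitivity itself entails the Hayashi--Samuel property---or verify it directly in the intended setting. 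Granting codenseness, the case $V\subseteq W$ is vacuous, the contradiction is reached, and $f$ is $\mathcal{\bf I}$-transitive, which completes the plan.
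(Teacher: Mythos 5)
Your diagnosis is exactly right, and in fact the difficulty you isolate is a genuine defect in the statement rather than in your argument. The paper gives no proof of this corollary; it is meant to follow from the remark preceding Corollary \ref{cor.tit.4.1.0}, namely that in a Hayashi--Samuel space the $\mathcal{\bf I}$-dense sets and the $*$-dense sets coincide. That remark is legitimately applicable to Corollary \ref{cor.tit.4.1.0}, where $\mathcal{\bf I}$-transitivity is a \emph{hypothesis} and so codenseness is available via Theorem \ref{thm.tit.4.7}; but here $\mathcal{\bf I}$-transitivity is the \emph{conclusion}, so invoking the Hayashi--Samuel property is circular --- precisely the circularity you flag. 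Your direct computation confirms that codenseness is the only missing ingredient: countable additivity puts $W\cap V$ in $\mathcal{\bf I}$, the basic $*$-open set $V\setminus(W\cap V)$ forces $V\subseteq W$, and one is left needing $\tau\cap\mathcal{\bf I}=\{\varnothing\}$ to rule out $V\in\mathcal{\bf I}$. That this cannot be avoided is shown by a two-point example: let $X=\{a,b\}$ with the discrete topology, $\mathcal{\bf I}=\{\varnothing,\{a\}\}$ (trivially countably additive), and $f$ the swap $a\mapsto b\mapsto a$. For every non-empty open $U$ the set $\bigcup_{n\ge 0}f^{n}(U)$ equals $X$ and is $*$-dense, yet $f^{n}(\{b\})\cap\{a\}$ is either $\{a\}$ or $\varnothing$ for all $n$, hence always in $\mathcal{\bf I}$, so $f$ is not $\mathcal{\bf I}$-transitive. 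So the corollary as printed is false, and your proposal --- add codenseness (equivalently, the Hayashi--Samuel hypothesis) and then run the argument you give, or equivalently pass to $\mathcal{\bf I}$-denseness and quote Theorem \ref{thm.tit.4.3} --- is the correct repair.

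One small point worth tidying: your contradiction hypothesis assumes $f^{m}(U)\cap V\in\mathcal{\bf I}$ for all $m\in\mathbb{Z_{+}}\cup\{0\}$, and the conclusion you extract is that \emph{some} $m\ge 0$ has $f^{m}(U)\cap V\notin\mathcal{\bf I}$, whereas Definition \ref{def.ncf.4.1} demands a \emph{positive} $n$. The union in the hypothesis starts at $n=0$, so the index produced could be $0$. This is an imprecision inherited from the paper's own proof of Theorem \ref{thm.tit.4.3}, not something you introduced, but if you want a clean statement you should either start the union at $n=1$ or note separately why the $n=0$ term can be discarded.
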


\begin{theorem}
  \label{thm.tit.4.4}
  Let $X$ be an $\mathcal{\bf I}$-space  and $(X, f)$ be a dynamical system. If $f$ is $\mathcal{\bf I}$-transitive, then for any non-empty open set $U$ in $X$, $\bigcup\limits_{n=0}^{\infty}f^{-n}(U)$ is $\mathcal{\bf I}$-dense in $X$.
\end{theorem}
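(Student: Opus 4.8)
The plan is to argue by contradiction, paralleling the proof of Theorem \ref{thm.tit.4.2}, but with one extra ingredient needed to pass between the backward iterates $f^{-n}(U)$ appearing in the union and the forward iterates $f^{n}(V)$ appearing in the definition of $\mathcal{\bf I}$-transitivity. First I would suppose that $\bigcup_{n=0}^{\infty} f^{-n}(U)$ fails to be $\mathcal{\bf I}$-dense; by the open-set characterization of $\mathcal{\bf I}$-denseness this produces a non-empty open set $V$ with $\left(\bigcup_{n=0}^{\infty} f^{-n}(U)\right)\cap V \in \mathcal{\bf I}$. Rewriting this as $\bigcup_{n=0}^{\infty}\left(f^{-n}(U)\cap V\right)$ and invoking the hereditary property of $\mathcal{\bf I}$, I obtain $f^{-n}(U)\cap V\in\mathcal{\bf I}$ for every $n\ge 0$.

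The key observation is then that each $f^{-n}(U)\cap V$ is an \emph{open} set: since $f$ is continuous, so is $f^{n}$, whence $f^{-n}(U)=(f^{n})^{-1}(U)$ is open, and its intersection with the open set $V$ is open. Because $f$ is $\mathcal{\bf I}$-transitive, Theorem \ref{thm.tit.4.7} tells us that $(X,\tau,\mathcal{\bf I})$ is a Hayashi-Samuel space, i.e.\ $\mathcal{\bf I}$ is codense and $\tau\cap\mathcal{\bf I}=\{\varnothing\}$. An open set lying in a codense ideal must be empty, so I conclude $f^{-n}(U)\cap V=\varnothing$ for all $n\ge 0$.

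Finally I would translate this back into the forward language of Definition \ref{def.ncf.4.1}. The identity $f^{n}(V)\cap U = f^{n}\!\left(V\cap f^{-n}(U)\right)$, which I would verify by an elementary double inclusion, shows that $f^{-n}(U)\cap V=\varnothing$ forces $f^{n}(V)\cap U=\varnothing\in\mathcal{\bf I}$ for every $n$. This contradicts $\mathcal{\bf I}$-transitivity applied to the ordered pair $(V,U)$, which guarantees a positive integer $m$ with $f^{m}(V)\cap U\notin\mathcal{\bf I}$. The contradiction establishes that $\bigcup_{n=0}^{\infty} f^{-n}(U)$ is $\mathcal{\bf I}$-dense.

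I expect the main obstacle to be this bridging step: the union in the statement is built from preimages, while transitivity is phrased in terms of images, so a naive imitation of Theorem \ref{thm.tit.4.2} does not close. The resolution leans on two facts that are easy to overlook, namely that continuity of $f$ makes the relevant sets open, and that transitivity already forces $\mathcal{\bf I}$ to be codense, which together let an ``open and in $\mathcal{\bf I}$'' set collapse to the empty set. An alternative route, pushing $V\cap f^{-n}(U)\in\mathcal{\bf I}$ forward through $f^{n}$, would instead require the hypothesis $f(\mathcal{\bf I})\subseteq\mathcal{\bf I}$, which is not assumed here; so the codenseness argument is the one I would commit to.
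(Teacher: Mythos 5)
Your proof is correct, and it is considerably more substantial than the one the paper gives. The paper's own argument consists of two observations: that $\bigcup_{n=0}^{\infty}f^{-n}(U)$ is open (by continuity of $f$), followed by the bare assertion that ``since $f$ is $\mathcal{\bf I}$-transitive'' this open set is $\mathcal{\bf I}$-dense --- it never explains how the transitivity condition, phrased in terms of forward images $f^{n}(V)\cap U$, yields anything about the backward union. You identified precisely this gap and closed it: assuming $\bigl(\bigcup_{n}f^{-n}(U)\bigr)\cap V\in\mathcal{\bf I}$, heredity gives $f^{-n}(U)\cap V\in\mathcal{\bf I}$ for each $n$; these sets are open, and Theorem~\ref{thm.tit.4.7} makes $\mathcal{\bf I}$ codense, so they are all empty; the identity $f^{n}(V)\cap U=f^{n}\bigl(V\cap f^{-n}(U)\bigr)$ then forces $f^{n}(V)\cap U=\varnothing\in\mathcal{\bf I}$ for every $n$, contradicting Definition~\ref{def.ncf.4.1} applied to the pair $(V,U)$. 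Your closing remark is also on point: without the codenseness detour one would need $f(\mathcal{\bf I})\subseteq\mathcal{\bf I}$ to push $V\cap f^{-n}(U)\in\mathcal{\bf I}$ forward through $f^{n}$, a hypothesis the theorem does not grant. In short, your write-up supplies the bridge between preimages and images that the paper's proof omits, and is what that proof should have been.
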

\begin{proof}
  Assume that $f$ is $\mathcal{\bf I}$-transitive.
Since $f$ is a continuous function and arbitrary union of open sets is open in a topological space, then $\bigcup\limits_{n=0}^{\infty}f^{-n}(U)$ is open. Again, since $f$ is $\mathcal{\bf I}$-transitive, $\bigcup\limits_{n=0}^{\infty}f^{-n}(U)$ is $\mathcal{\bf I}$-dense in $X$. This completes the proof.
\end{proof}

\begin{corollary}
  \label{cor.tit.4.1.2}
  Let $X$ be an $\mathcal{\bf I}$-space  and $(X, f)$ be a dynamical system. If $f$ is $\mathcal{\bf I}$-transitive, then for any non-empty open set $U$ in $X$, $\bigcup\limits_{n=0}^{\infty}f^{-n}(U)$ is $*$-dense in $X$.
\end{corollary}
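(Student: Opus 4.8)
The plan is to obtain this statement as an immediate consequence of Theorem~\ref{thm.tit.4.4} together with the elementary containment between $\mathcal{\bf I}$-dense and $*$-dense sets recorded in Section~\ref{sec2}. First I would invoke Theorem~\ref{thm.tit.4.4}: since $f$ is $\mathcal{\bf I}$-transitive and $U$ is a non-empty open set, the set $\bigcup\limits_{n=0}^{\infty}f^{-n}(U)$ is $\mathcal{\bf I}$-dense in $X$. Then I would apply the fact, stated among the definitions, that every $\mathcal{\bf I}$-dense subset of a topological space is $*$-dense. Applying this with $A=\bigcup\limits_{n=0}^{\infty}f^{-n}(U)$ yields at once that this union is $*$-dense in $X$, which is exactly the claim.

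To keep the argument self-contained at the level of the closure operators, I could instead spell out the definitional implication directly: $\mathcal{\bf I}$-denseness of $A$ means $A^{*}(\mathcal{\bf I})=X$, and since $Cl^{*}(A)=A\cup A^{*}(\mathcal{\bf I})\supseteq A^{*}(\mathcal{\bf I})=X$, we get $Cl^{*}(A)=X$, i.e. $A$ is $*$-dense. This one-line verification makes the passage from $\mathcal{\bf I}$-denseness to $*$-denseness transparent without citing any external result.

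I do not anticipate any genuine obstacle here, as the statement merely packages Theorem~\ref{thm.tit.4.4} with a definitional implication. The only point requiring a little care is bookkeeping: one should confirm that the implication ``$\mathcal{\bf I}$-dense $\Rightarrow$ $*$-dense'' is unconditional, needing neither codenseness nor compatibility nor the Hayashi-Samuel property, so that no extra hypotheses on $\mathcal{\bf I}$ have to be carried over into the corollary. For this reason the route through the definition of $Cl^{*}$ is preferable to routing via Corollary~\ref{cor.tit.4.1.0} and the Hayashi-Samuel machinery, which would drag in codenseness of $\mathcal{\bf I}$ as an intermediary (itself available only through Theorem~\ref{thm.tit.4.7}) and is superfluous for establishing this corollary.
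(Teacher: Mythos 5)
Your proposal is correct, and it reaches the conclusion by the same basic decomposition the paper intends: first Theorem~\ref{thm.tit.4.4} gives that $\bigcup_{n=0}^{\infty}f^{-n}(U)$ is $\mathcal{\bf I}$-dense, and then one passes from $\mathcal{\bf I}$-denseness to $*$-denseness. The only difference lies in how that second step is justified. The paper (in the remark preceding Corollary~\ref{cor.tit.4.1.0}) routes it through the Hayashi--Samuel machinery: $\mathcal{\bf I}$-transitivity forces $\mathcal{\bf I}$ to be codense by Theorem~\ref{thm.tit.4.7}, and in a Hayashi--Samuel space the $\mathcal{\bf I}$-dense sets coincide with the dense sets of the $*$-topology. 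You instead use only the unconditional implication ``$\mathcal{\bf I}$-dense $\Rightarrow$ $*$-dense,'' verified directly from $Cl^{*}(A)=A\cup A^{*}\supseteq A^{*}=X$, which is already recorded among the paper's definitions. Your route is slightly more economical: it needs only one direction of the equivalence, that direction holds for every ideal, and so Theorem~\ref{thm.tit.4.7} and codenseness never enter. The paper's route buys the full two-way identification of the two families of dense sets, which is what it actually needs elsewhere (e.g.\ for the converse-type statements such as Corollary~\ref{cor.tit.4.1.3}), but for this particular corollary your observation that the extra hypotheses are superfluous is accurate.
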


\begin{theorem}
  \label{thm.tit.4.5}
  Let $X$ be an $\mathcal{\bf I}$-space  and $(X, f)$ be a dynamical system. Then $f$ is topologically $f^{k}(\mathcal{\bf I})$-transitive for some positive integer $k$ if  for every non-empty open set $U$ in $X$, $\bigcup\limits_{n=0}^{\infty}f^{-n}(U)$  is $\mathcal{\bf I}$-dense in $X$  and $\mathcal{\bf I}$ is countably additive.
\end{theorem}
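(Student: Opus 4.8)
The plan is to mirror the proof of Theorem \ref{thm.tit.4.3}, but to run the argument through the preimages $f^{-n}$ and the pushed-forward ideal $f^{k}(\mathcal{\bf I})$. Fix an arbitrary pair of non-empty open sets $A$ and $B$ in $X$. First I would apply the hypothesis to the open set $U=B$: the set $\bigcup_{n=0}^{\infty}f^{-n}(B)$ is $\mathcal{\bf I}$-dense in $X$, so by the characterization of $\mathcal{\bf I}$-denseness it meets the non-empty open set $A$ outside $\mathcal{\bf I}$, i.e. $\left(\bigcup_{n=0}^{\infty}f^{-n}(B)\right)\cap A\notin\mathcal{\bf I}$. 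Distributing the intersection over the union yields $\bigcup_{n=0}^{\infty}\bigl(f^{-n}(B)\cap A\bigr)\notin\mathcal{\bf I}$.

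Next I would invoke countable additivity exactly as in Theorem \ref{thm.tit.4.3}: were every term $f^{-n}(B)\cap A$ a member of $\mathcal{\bf I}$, the countable union would again lie in $\mathcal{\bf I}$, contradicting the previous step. Hence there is a positive integer $m$ with $f^{-m}(B)\cap A\notin\mathcal{\bf I}$, and I would take $k=m$.

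The third step is the set-theoretic heart of the argument: I would establish the identity $f^{m}\bigl(f^{-m}(B)\cap A\bigr)=f^{m}(A)\cap B$. The inclusion $\subseteq$ is routine, since $f^{m}(f^{-m}(B))\subseteq B$ and $f^{m}$ preserves intersection up to inclusion; for $\supseteq$ one checks that any $y\in f^{m}(A)\cap B$ has the form $y=f^{m}(a)$ with $a\in A$ and $f^{m}(a)\in B$, so $a\in f^{-m}(B)\cap A$. Writing $S=f^{-m}(B)\cap A$, this says $f^{m}(A)\cap B=f^{m}(S)$ with $S\notin\mathcal{\bf I}$, and from $f^{m}(S)\notin f^{m}(\mathcal{\bf I})$ I would conclude $f^{m}(A)\cap B\notin f^{m}(\mathcal{\bf I})$, which is precisely $f^{k}(\mathcal{\bf I})$-transitivity for the pair $A,B$.

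The step I expect to be the main obstacle is the passage ``$S\notin\mathcal{\bf I}\Rightarrow f^{m}(S)\notin f^{m}(\mathcal{\bf I})$''. Since $f^{m}(\mathcal{\bf I})=\{f^{m}(I):I\in\mathcal{\bf I}\}$, membership of the image $f^{m}(S)$ only forces $f^{m}(S)=f^{m}(I)$ for some $I\in\mathcal{\bf I}$, which need not entail $S\in\mathcal{\bf I}$ when $f^{m}$ collapses points; a constant map with $\mathcal{\bf I}=\{\varnothing,\{p\}\}$ already shows the implication can fail. The argument does go through cleanly when $f$, and hence $f^{m}$, is injective, for then $f^{m}(S)=f^{m}(I)$ yields $S=I\in\mathcal{\bf I}$, a contradiction; so I would either add injectivity of $f$ as a hypothesis or control the fibres of $f^{m}$ over $S$ directly. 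A secondary point to settle is the reading of ``for some $k$'': the index $m$ produced depends on the chosen pair $A,B$, so the conclusion is best understood as asserting, for every pair, a common value $k=m$ serving simultaneously as the iteration count and as the exponent of the pushed-forward ideal, rather than a single $k$ uniform over all pairs.
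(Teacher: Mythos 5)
Your proposal follows the paper's own proof step for step: apply the hypothesis to one of the two open sets, intersect the union of preimages with the other, invoke countable additivity to extract a single index $m$ with $f^{-m}(B)\cap A\notin\mathcal{\bf I}$, and then push forward through $f^{m}$ via the identity $f^{m}\bigl(f^{-m}(B)\cap A\bigr)=f^{m}(A)\cap B$, which you verify correctly and which the paper uses only implicitly. The obstacle you single out is exactly where the paper's argument is thinnest: the paper passes from ``$f^{-k}(U)\cap V\notin\mathcal{\bf I}$'' to ``$U\cap f^{k}(V)\notin f^{k}(\mathcal{\bf I})$'' with a bare ``this implies,'' and that implication is the contrapositive of ``$f^{m}(S)\in f^{m}(\mathcal{\bf I})\Rightarrow S\in\mathcal{\bf I}$,'' which can fail when $f^{m}$ identifies points, precisely as your constant-map example indicates (if $U\cap f^{k}(V)=f^{k}(I)$ for some $I\in\mathcal{\bf I}$, one only gets $f^{-k}(U)\cap V\subseteq f^{-k}(f^{k}(I))$, and this preimage need not lie in $\mathcal{\bf I}$). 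So your assessment is accurate: the argument as written — yours and the paper's alike — closes only under an extra hypothesis such as injectivity of $f$, or some other control on the fibres of $f^{m}$, and nothing in the stated hypotheses supplies this. Your secondary remark about the quantifier is also correct: the index $k$ produced depends on the chosen pair of open sets and serves simultaneously as the iteration count and as the exponent in $f^{k}(\mathcal{\bf I})$, so the conclusion must be read pairwise rather than with a single uniform $k$.
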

\begin{proof}
  Let $U$ and $V$ be any two non-empty open sets in $X$. Since, $\bigcup\limits_{n=0}^{\infty}f^{-n}(U)$ is $\mathcal{\bf I}$-dense in $X$, then $\bigcup\limits_{n=0}^{\infty}f^{-n}(U)\cap V\notin\mathcal{\bf I}$ for every non-empty open sets $V$ in $X$. This implies there exists a positive integer $k$ such that $f^{-k}(U)\cap V\notin\mathcal{\bf I}$ since $\mathcal{\bf I}$ is countably additive. This implies, there exists a positive integer $k$ such that $U\cap f^{k}(V)\notin f^{k}(\mathcal{\bf I})$. Hence, $f$ is topologically $f^{k}(\mathcal{\bf I})$-transitive. This completes the proof.
\end{proof}

 \begin{corollary}
  \label{cor.tit.4.1.3}
  Let $X$ be an $\mathcal{\bf I}$-space  and $(X, f)$ be a dynamical system. Then $f$ is topologically $f^{k}(\mathcal{\bf I})$-transitive for some positive integer $k$ if  for any non-empty open set $U$ in $X$, $\bigcup\limits_{n=0}^{\infty}f^{-n}(U)$  is $*$-dense in $X$  and $\mathcal{\bf I}$ is countably additive.
\end{corollary}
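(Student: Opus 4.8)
The plan is to deduce this directly from Theorem \ref{thm.tit.4.5}, whose hypothesis is word-for-word the same except that it requires $\bigcup_{n=0}^{\infty} f^{-n}(U)$ to be $\mathcal{\bf I}$-dense instead of $*$-dense. So the whole task collapses to upgrading the given $*$-denseness to $\mathcal{\bf I}$-denseness for the single set $G := \bigcup_{n=0}^{\infty} f^{-n}(U)$. The first thing I would record is that $G$ is open: since $f$ is continuous each $f^{-n}(U)$ is open, and an arbitrary union of open sets is open — this is exactly the observation already exploited in the proof of Theorem \ref{thm.tit.4.4}. Openness of $G$ is what makes the density upgrade available.

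The heart of the matter is the equivalence ``$*$-dense $\Leftrightarrow$ $\mathcal{\bf I}$-dense'' for open sets. One implication is free, since every $\mathcal{\bf I}$-dense set is $*$-dense by definition. For the converse I would argue by contradiction on $G$. Suppose $G$ is $*$-dense but not $\mathcal{\bf I}$-dense; then there is a non-empty open $V$ with $I_0 := G \cap V \in \mathcal{\bf I}$. The set $V \setminus I_0$ is $*$-open, being of the basic form $O \setminus I_1$ with $O \in \tau$ and $I_1 \in \mathcal{\bf I}$, and $G \cap (V \setminus I_0) = (G \cap V)\setminus I_0 = \varnothing$. Because $G$ is $*$-dense it must meet every non-empty $*$-open set, forcing $V \setminus I_0 = \varnothing$, i.e. $V \subseteq I_0 \subseteq G$; hence $V = G \cap V = I_0 \in \tau \cap \mathcal{\bf I}$. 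Under codenseness this yields $V = \varnothing$, a contradiction, so $G$ is $\mathcal{\bf I}$-dense. Alternatively one may simply cite the fact recorded before Corollary \ref{cor.tit.4.1.0} that in a Hayashi-Samuel space the $\mathcal{\bf I}$-dense and $*$-dense sets coincide \cite{SM2007}. Once $G$ is $\mathcal{\bf I}$-dense, Theorem \ref{thm.tit.4.5} applies verbatim, the countable additivity being carried over unchanged, and delivers topological $f^{k}(\mathcal{\bf I})$-transitivity for some positive integer $k$.

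The main obstacle is that this upgrade is genuinely false without codenseness, yet codenseness is not listed among the hypotheses, and (unlike in Corollary \ref{cor.tit.4.1.0}) we cannot invoke Theorem \ref{thm.tit.4.7} to supply it, since transitivity is the \emph{conclusion} here rather than an assumption. Countable additivity alone does not force $\tau \cap \mathcal{\bf I} = \{\varnothing\}$, so the reduction really does need the extra codense hypothesis. The clean remedy is to state the corollary inside the Hayashi-Samuel setting, either by adding ``$\mathcal{\bf I}$ codense'' to the statement or by making explicit that the proof runs in such a space; pinning down exactly where this codenseness is supposed to come from is the delicate point I would scrutinise before committing to the argument above.
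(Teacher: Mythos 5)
The paper prints no proof of this corollary; the intended argument is exactly your reduction, namely the remark preceding Corollary \ref{cor.tit.4.1.0} that in a Hayashi-Samuel space the $\mathcal{\bf I}$-dense and $*$-dense sets coincide, which converts the $*$-dense hypothesis into the $\mathcal{\bf I}$-dense hypothesis of Theorem \ref{thm.tit.4.5}. Your self-contained derivation of that coincidence (intersecting with the basic $*$-open set $V\setminus I_0$) is correct and in fact works for arbitrary subsets, so the openness of $G$ is not even needed for the upgrade.

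More importantly, the obstacle you flag is real and not merely cosmetic: the corollary is false as stated. The paper's only route to codenseness is Theorem \ref{thm.tit.4.7}, which requires $\mathcal{\bf I}$-transitivity as a hypothesis; here transitivity is the conclusion, so the chain is circular, exactly as you say. A concrete counterexample: let $X=\{a,b\}$ with the discrete topology, $\mathcal{\bf I}=\{\varnothing,\{a\}\}$ (countably additive, not codense), and $f$ the swap $f(a)=b$, $f(b)=a$. For every non-empty open $U$ one has $\bigcup_{n=0}^{\infty}f^{-n}(U)=X$, which is $*$-dense, so all hypotheses hold. But $f^{k}(\mathcal{\bf I})$ equals $\{\varnothing,\{b\}\}$ for odd $k$ and $\{\varnothing,\{a\}\}$ for even $k$, and taking $A=B=\{b\}$ in the first case (respectively $A=B=\{a\}$ in the second) shows $f^{n}(A)\cap B\in f^{k}(\mathcal{\bf I})$ for every $n$; hence $f$ is not $f^{k}(\mathcal{\bf I})$-transitive for any $k$. (Note that $X$ is not $\mathcal{\bf I}$-dense here, so Theorem \ref{thm.tit.4.5} itself is untouched; the failure lies precisely in the $*$-dense-to-$\mathcal{\bf I}$-dense upgrade.) Your proposed remedy --- adding codenseness of $\mathcal{\bf I}$ (or the Hayashi-Samuel assumption) to the hypotheses --- is the correct repair, and with it your argument goes through completely.
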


\begin{theorem}
  \label{thm.tit.4.5}
  Let $X$ be an $\mathcal{\bf I}$-space and $(X, f)$ be a dynamical system and $\mathcal{\bf I}$-transitive. If $A\subseteq X$ is closed and $f(X\setminus A)\subseteq X \setminus A$, then either $A=X$ or $\psi(A^{*}(\mathcal{\bf I}))= \varnothing$.
\end{theorem}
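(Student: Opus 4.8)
The plan is to establish the dichotomy by assuming $A \neq X$ and then deriving $\psi(A^{*}(\mathcal{\bf I})) = \varnothing$. Since $A$ is closed and $A \neq X$, the set $U := X \setminus A$ is a non-empty open subset of $X$, so it is a legitimate test set for the definition of $\mathcal{\bf I}$-transitivity. The first step is to record the consequence of the invariance hypothesis: writing $U = X \setminus A$, the assumption $f(X \setminus A) \subseteq X \setminus A$ reads $f(U) \subseteq U$, and a one-line induction then gives $f^{n}(U) \subseteq U$ for every positive integer $n$. Consequently, for any non-empty open set $V$ one has the inclusion $f^{n}(U) \cap V \subseteq U \cap V$.

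The crux is to feed this inclusion into Definition \ref{def.ncf.4.1}. Fixing an arbitrary non-empty open set $V$, $\mathcal{\bf I}$-transitivity supplies a positive integer $n$ with $f^{n}(U) \cap V \notin \mathcal{\bf I}$; the inclusion $f^{n}(U) \cap V \subseteq U \cap V$ together with the hereditary property of the ideal then forces $U \cap V \notin \mathcal{\bf I}$. Because $V$ was arbitrary, $U = X \setminus A$ meets every non-empty open set in a set outside $\mathcal{\bf I}$, which is exactly the stated criterion for $U$ to be $\mathcal{\bf I}$-dense; equivalently $(X \setminus A)^{*} = X$, that is, $\psi(A) = X \setminus (X \setminus A)^{*} = \varnothing$.

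It remains to transfer this from $A$ to $A^{*}(\mathcal{\bf I})$. Because $A$ is closed, the local function obeys $A^{*}(\mathcal{\bf I}) \subseteq Cl(A) = A$; and since $\psi$ is monotone (being $X$ minus the local function of the complement, and the local function is monotone), the inclusion $A^{*}(\mathcal{\bf I}) \subseteq A$ yields $\psi(A^{*}(\mathcal{\bf I})) \subseteq \psi(A) = \varnothing$, hence $\psi(A^{*}(\mathcal{\bf I})) = \varnothing$. I do not anticipate a genuine obstacle here: the whole idea is the invariance-plus-heredity step that makes $X \setminus A$ come out $\mathcal{\bf I}$-dense, and the only point requiring care is the bookkeeping with the local function and $\psi$ — confirming $A^{*} \subseteq A$ for closed $A$ and the monotonicity of $\psi$ — after which the conclusion is immediate.
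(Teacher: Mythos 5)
Your proof is correct and rests on the same key idea as the paper's: the invariance $f^{n}(X\setminus A)\subseteq X\setminus A$ combined with the hereditary property of the ideal and the fact that $A^{*}(\mathcal{\bf I})\subseteq Cl(A)=A$ for closed $A$. The only difference is organizational: you argue directly that $X\setminus A$ is $\mathcal{\bf I}$-dense (hence $\psi(A)=\varnothing$) and then pass to $\psi(A^{*}(\mathcal{\bf I}))$ by monotonicity of $\psi$, whereas the paper assumes $\psi(A^{*}(\mathcal{\bf I}))\neq\varnothing$, extracts a witness open set $V$ with $V\setminus A^{*}(\mathcal{\bf I})\in\mathcal{\bf I}$, and contradicts $\mathcal{\bf I}$-transitivity.
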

\begin{proof}
  Let $f$ is $\mathcal{\bf I}$-transitive, $A\subseteq X$ is closed and $f(X\setminus A)\subseteq X\setminus A$. Assume that $A\neq X$ and  $\psi(A^{*}(\mathcal{\bf I}))\neq \varnothing$. Put $U=X\setminus A$. Then $U$ is open, and $f^{n}(U)\subseteq U$. Since $\psi(A^{*}(\mathcal{\bf I}))\neq \varnothing$, then there exists an open set $V$ in $X$ such that $V \setminus A^{*}(\mathcal{\bf I})\in \mathcal{\bf I}$ i.e. $V \cap (X\setminus A^{*}(\mathcal{\bf I}))\in \mathcal{\bf I}$. Also for every subset $A$ of $X$, $A^{*}(\mathcal{\bf I})\subseteq Cl(A)$. Since $A$ is closed, then $A^{*}(\mathcal{\bf I})\subseteq Cl(A)=A$ and hence $V \cap (X\setminus A)\in \mathcal{\bf I}$ (by hereditary property of $\mathcal{\bf I}$). Further, we have, $U=X\setminus A$ and $f^{n}(U)\subseteq U$ for  all $n\in\mathbb{Z_{+}}$. This implies, $V \cap f^{n}(U)\in \mathcal{\bf I}$ for  all $n\in\mathbb{Z_{+}}$ which contradicts the fact that $f$ is $\mathcal{\bf I}$-transitive. Hence either $A=X$ or  $\psi(A^{*}(\mathcal{\bf I}))=\varnothing$. This completes the proof.
\end{proof}

\begin{theorem}
  \label{thm.tit.4.6}
  Let $X$ be an $\mathcal{\bf I}$-space. The following statements are equivalent:\\
  $(1)$ $f$ is $\mathcal{\bf I}$-transitive.\\
   $(2)$ if $U\subseteq X$ is open and $f(U)=U$, then either $U=\varnothing$ or $U$ is $\mathcal{\bf I}$-dense in $X$.\\
   $(3)$ if $U\subseteq X$ is open and $f(U)=U$, then either $U=\varnothing$ or $U$ is $*$-dense in $X$.
\end{theorem}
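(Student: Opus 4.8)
The plan is to prove the three statements equivalent along the cycle $(1)\Rightarrow(2)\Rightarrow(3)\Rightarrow(1)$. The first implication is immediate from what is already available. Let $U$ be a non-empty open set with $f(U)=U$. By induction $f(U)=U$ gives $f^{n}(U)=U$ for every $n\geq 0$, so that $\bigcup_{n=0}^{\infty}f^{n}(U)=U$. Since $f$ is $\mathcal{\bf I}$-transitive, Theorem~\ref{thm.tit.4.2} tells us that $\bigcup_{n=0}^{\infty}f^{n}(U)$ is $\mathcal{\bf I}$-dense, and hence $U$ itself is $\mathcal{\bf I}$-dense. Thus $(1)\Rightarrow(2)$ requires essentially no new work beyond collapsing the orbit union via invariance.

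For $(2)\Rightarrow(3)$ I would simply invoke the fact recorded in Section~\ref{sec2} that every $\mathcal{\bf I}$-dense subset is $*$-dense: an open invariant $U$ is, by $(2)$, either empty or $\mathcal{\bf I}$-dense, and in the latter case automatically $*$-dense. It is worth extracting here the structural fact the converse will need. Since $X$ is itself an invariant open set, condition $(2)$ forces $X$ to be $\mathcal{\bf I}$-dense, which is exactly the statement that $\mathcal{\bf I}$ is codense; and once $\mathcal{\bf I}$ is codense one checks that every \emph{open} set $O$ satisfies $O\subseteq O^{*}(\mathcal{\bf I})$, so that on open sets ``$*$-dense'' and ``$\mathcal{\bf I}$-dense'' coincide. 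This is the bridge that makes $(2)$ and $(3)$ interchangeable on the class of invariant open sets, and it is what will let me translate a failure of $*$-density back into the language of $\mathcal{\bf I}$ when closing the cycle.

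The real content of the theorem is $(3)\Rightarrow(1)$, which I would argue by contraposition. Assuming $f$ is not $\mathcal{\bf I}$-transitive, there are non-empty open sets $A,B$ with $f^{n}(A)\cap B\in\mathcal{\bf I}$ for all $n$. The goal is to manufacture from these data a non-empty open set $U$ with $f(U)=U$ that fails to be $*$-dense, contradicting $(3)$. The natural candidate is the dynamical saturation of $A$, built from $\bigcup_{n\geq 0}f^{n}(A)$ (passing to interiors where forward images lose openness), whose $*$-closure should avoid a non-empty $*$-open piece of $B$.

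The hard part will be exactly this construction, and it is where the hypotheses must be used with care. First, $f$ is only assumed continuous, so the forward images $f^{n}(A)$ need not be open and producing a genuinely invariant \emph{open} set is delicate; it is cleaner to work with a completely invariant saturation, and in fact for the statement to be robust one should read the invariance as $f(U)\subseteq U$ rather than strict equality (otherwise, when $f$ is not onto, there may be no non-empty $U$ with $f(U)=U$ at all, and $(2)$ and $(3)$ degenerate to vacuous truths). Second, and more seriously, $\mathcal{\bf I}$ is only finitely additive, so one cannot conclude $\big(\bigcup_{n\geq 0}f^{n}(A)\big)\cap B\in\mathcal{\bf I}$ from the relations $f^{n}(A)\cap B\in\mathcal{\bf I}$; the argument must therefore isolate a single invariant open $U$ and verify \emph{directly} that some non-empty $*$-open subset of $B$ is disjoint from $U$, instead of routing through the countable union. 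The codenseness extracted in the previous step, together with the inclusion $U\subseteq U^{*}(\mathcal{\bf I})$ it supplies, is precisely what lets me rephrase this failure of $*$-density in terms of $\mathcal{\bf I}$ and thereby close the cycle.
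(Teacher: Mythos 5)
Your $(1)\Rightarrow(2)$ and $(2)\Rightarrow(3)$ are correct. Routing $(1)\Rightarrow(2)$ through Theorem~\ref{thm.tit.4.2} by collapsing $\bigcup_{n}f^{n}(U)$ to $U$ is a slightly slicker packaging of what the paper does directly (the paper picks a witness $V$ with $U\cap V\in\mathcal{\bf I}$ and notes $f^{n}(U)\cap V=U\cap V$), and $(2)\Rightarrow(3)$ is indeed immediate from the fact that every $\mathcal{\bf I}$-dense set is $*$-dense. One caveat even here: your extraction of codenseness from $(2)$ applies $(2)$ to $U=X$, which requires $f(X)=X$, i.e.\ surjectivity of $f$ --- the same issue that haunts the rest of the argument.

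The genuine gap is $(3)\Rightarrow(1)$: you announce a contrapositive argument, name a candidate invariant set, and then list the reasons the construction is hard (forward images of open sets under a merely continuous $f$ need not be open, $\mathcal{\bf I}$ is only finitely additive so the orbit union cannot be pushed into $\mathcal{\bf I}$, and $(2)$, $(3)$ may be vacuous when $f$ is not onto) --- but you never overcome these obstacles, so the cycle is not closed and the proof is incomplete. Worse, the obstacles are not merely technical: the implication you still owe is false as stated. The paper's own Example~\ref{exm.tit.4.4} ($X=\{0\}\cup\{\frac{1}{n}:n\in\mathbb{N}\}$, $f(\frac{1}{n})=\frac{1}{n+1}$, $f(0)=0$, $\mathcal{\bf I}=\{\varnothing,\{0\}\}$) is a system in which the only open $U$ with $f(U)=U$ is $\varnothing$: if some $\frac{1}{n}$ lay in such a $U$, then pulling back along $f(U)=U$ would force $1\in U$, yet $1$ has no preimage, and $\{0\}$ is not open. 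Hence $(2)$ and $(3)$ hold vacuously there, while the paper shows $f$ is not $\mathcal{\bf I}$-transitive. So no argument can close the cycle without strengthening the hypotheses (e.g.\ assuming $f$ surjective and open, or replacing $f(U)=U$ by backward invariance and working with $\bigcup_{n}f^{-n}(U)$ as in Theorem~\ref{thm.tit.4.4}). For comparison, the paper's own proof of $(2)\Rightarrow(1)$ commits exactly the error you were trying to avoid: after negating transitivity it silently identifies the witness open set with the invariant $U$ and applies $f^{n}(U)=U$ to a set that need not be invariant. Your diagnosis of where the difficulty lives is accurate, but a diagnosis is not a proof.
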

\begin{proof}
  $(1)\implies (2)$ Let $f$ is $\mathcal{\bf I}$-transitive, $U\subseteq X$ is open and $f(U)=U$. Assume that $U\neq \varnothing$ and $U$ is not $\mathcal{\bf I}$-dense in $X$. Then there exists a non-empty open set $V$ in $X$ such that $U\cap V\in \mathcal{\bf I}$. Since  $f(U)=U$, then $f^{n}(U)=U$  for all $n\in\mathbb{Z_{+}}$ and hence $f^{n}(U)\cap V\in \mathcal{\bf I}$ for all $n\in\mathbb{Z_{+}}$ which contradicts the ideal transitivity of $f$. Hence either $U=\varnothing$ or $U$ is $\mathcal{\bf I}$-dense in $X$ in $X$.\\


$(2)\implies (1)$ Assume that the condition $(2)$ holds. Let $U$ be non-empty open subset of $X$ with $f(U)=U$. Then, $U$ is $\mathcal{\bf I}$-dense in $X$. Then, for all non-empty open subsets $V$ in $X$, $U\cap V\notin \mathcal{\bf I}$. Assume that $f$ is not $\mathcal{\bf I}$-transitive, then there exists a non-empty open set $V_{1}$ in $X$ such that $f^{n}(U)\cap V_{1}\in \mathcal{\bf I}$ for all $n\in\mathbb{Z_{+}}$. This implies, $U\cap V_{1}\in \mathcal{\bf I}$ for all $n\in\mathbb{Z_{+}}$  which contradicts the fact that $U\cap V\notin \mathcal{\bf I}$ for all non-empty open subsets $V$ in $X$. Hence $f$ is $\mathcal{\bf I}$-transitive.\\
This completes the proof.
\end{proof}

\begin{corollary}
  \label{cor.tit.4.1}
  Let $X$ be an $\mathcal{\bf I}$-space and $f$ be $\mathcal{\bf I}$-transitive. If $U\subseteq X$ is open and invariant, then either $U=\varnothing$ or $U$ is $\mathcal{\bf I}$-dense (respectively, $*$-dense) in $X$.
\end{corollary}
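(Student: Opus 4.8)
The plan is to mimic the implication $(1)\Rightarrow(2)$ in Theorem~\ref{thm.tit.4.6}, replacing the equality $f(U)=U$ used there by the weaker invariance hypothesis $f(U)\subseteq U$ and compensating for the loss of equality with the hereditary property of the ideal. Since Theorem~\ref{thm.tit.4.6} presupposes $f(U)=U$, it cannot be quoted verbatim for an invariant set, so the argument has to be re-run; but the modification is minor.

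First I would record the elementary observation that invariance propagates through iteration: if $f(U)\subseteq U$, then $f^{n}(U)\subseteq U$ for every $n\in\mathbb{Z_{+}}\cup\{0\}$, by a one-line induction, since $f^{n+1}(U)=f(f^{n}(U))\subseteq f(U)\subseteq U$. This inclusion is the exact substitute for the identity $f^{n}(U)=U$ exploited in Theorem~\ref{thm.tit.4.6}, and it is the only place where the weaker hypothesis is felt. Then I would argue by contradiction: assuming $U\neq\varnothing$ but $U$ not $\mathcal{\bf I}$-dense, the characterization of $\mathcal{\bf I}$-denseness recalled in Section~\ref{sec2} yields a non-empty open set $V$ with $U\cap V\in\mathcal{\bf I}$. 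Because $f^{n}(U)\subseteq U$, we have $f^{n}(U)\cap V\subseteq U\cap V\in\mathcal{\bf I}$, so the hereditary property of $\mathcal{\bf I}$ forces $f^{n}(U)\cap V\in\mathcal{\bf I}$ for every $n$. This contradicts the $\mathcal{\bf I}$-transitivity of $f$ applied to the pair $(U,V)$, and hence $U$ must be $\mathcal{\bf I}$-dense.

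The parenthetical $*$-dense conclusion then costs nothing extra: by the implication ``every $\mathcal{\bf I}$-dense subset of $X$ is $*$-dense'' stated in Section~\ref{sec2}, the $\mathcal{\bf I}$-denseness of $U$ already delivers its $*$-denseness. I do not expect a genuine obstacle here, as this is essentially a corollary of the proof technique of Theorem~\ref{thm.tit.4.6}; the only point requiring care is to invoke the hereditary property in place of the set equality, which is precisely what upgrades the invariant-sets hypothesis $f(U)\subseteq U$ to the same conclusion obtained there for $f(U)=U$.
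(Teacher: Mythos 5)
Your argument is correct and is essentially the proof the paper intends: it re-runs the $(1)\Rightarrow(2)$ step of Theorem~\ref{thm.tit.4.6}, with the equality $f^{n}(U)=U$ replaced by the inclusion $f^{n}(U)\subseteq U$ and the hereditary property of $\mathcal{\bf I}$ closing the gap, and the $*$-dense case follows since every $\mathcal{\bf I}$-dense set is $*$-dense. No issues.
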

Recall the following results:
\begin{theorem}[\cite{JH1990}]
  \label{thm.tct.4.8.1}
  Let $(X, \tau, \mathcal{\bf I})$ be an $\mathcal{\bf I}$-space. Then the following statements are equivalent:\\
  $(1)$ $\tau$-boundary.\\
  $(2)$ $\psi(\varnothing)=\varnothing$.\\
  $(3)$ $X=X^{*}$.\\
  $(4)$ For every $A\in\tau$, $A\subset A^{*}$.\\
  $(4)$ if $I\in\mathcal{\bf I}$, then $\psi(I)=\varnothing.$
\end{theorem}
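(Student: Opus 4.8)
The plan is to deduce all of the equivalences from one core computation, namely $\psi(\varnothing) = X \setminus X^{*}$, together with the explicit description $X^{*} = \{x \in X : U_{x} \notin \mathcal{\bf I} \text{ for all } U_{x} \in \tau(x)\}$, which follows because $U_{x} \cap X = U_{x}$. First I would record the consequence that $x \notin X^{*}$ holds precisely when some open neighbourhood of $x$ belongs to $\mathcal{\bf I}$; this single observation drives the equivalence of $\tau$-boundary, $\psi(\varnothing)=\varnothing$, and $X = X^{*}$.

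For $\tau$-boundary $\Leftrightarrow X = X^{*}$: if $\tau \cap \mathcal{\bf I} = \{\varnothing\}$, then for any $x$ every $U_{x} \in \tau(x)$ is a non-empty open set and so lies outside $\mathcal{\bf I}$, forcing $x \in X^{*}$; the inclusion $X^{*} \subseteq X$ is automatic. Conversely, were some non-empty $O \in \tau \cap \mathcal{\bf I}$, any $x \in O$ would witness $x \notin X^{*}$, contradicting $X = X^{*}$. The equivalence with $\psi(\varnothing)=\varnothing$ is then immediate from $\psi(\varnothing) = X \setminus (X \setminus \varnothing)^{*} = X \setminus X^{*}$.

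To fold in the condition $A \subseteq A^{*}$ for every $A \in \tau$, I would show $\tau$-boundary $\Rightarrow$ this condition $\Rightarrow X = X^{*}$. For the first implication, fix open $A$ and $x \in A$; for each $U_{x} \in \tau(x)$ the set $U_{x} \cap A$ is open and contains $x$, hence is non-empty and therefore not in $\mathcal{\bf I}$, so $x \in A^{*}$. The second implication is just the special case $A = X$. This closes the loop among the first four conditions.

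Finally, for the condition ``$\psi(I) = \varnothing$ for every $I \in \mathcal{\bf I}$'', the key step --- and the only one requiring more than a direct definition chase --- is the identity $\psi(I) = \psi(\varnothing)$ for all $I \in \mathcal{\bf I}$. I would first note that $I^{*} = \varnothing$: any $U_{x} \cap I \subseteq I \in \mathcal{\bf I}$ lies in $\mathcal{\bf I}$ by the hereditary property, so no point enters $I^{*}$. Then finite additivity of the local function gives $X^{*} = \bigl((X \setminus I) \cup I\bigr)^{*} = (X \setminus I)^{*} \cup I^{*} = (X \setminus I)^{*}$, whence $\psi(I) = X \setminus (X \setminus I)^{*} = X \setminus X^{*} = \psi(\varnothing)$. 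With this in hand the equivalence with $\psi(\varnothing) = \varnothing$ is automatic: taking $I = \varnothing$ gives one direction, and the identity supplies the other. The main obstacle is thus isolated in this last identity, which rests on the additivity of $(\cdot)^{*}$ and on $I^{*} = \varnothing$; every remaining implication is a short unwinding of the definitions of $(\cdot)^{*}$ and $\psi$.
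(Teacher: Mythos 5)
Your proof is correct. Note, however, that the paper does not prove this statement at all: it is recalled verbatim (with a citation to Jankovi\'{c} and Hamlett) as background for Theorem \ref{thm.tit.4.8}, so there is no in-paper argument to compare against. Your route --- reducing everything to the identity $\psi(\varnothing)=X\setminus X^{*}$ and the observation that $x\notin X^{*}$ exactly when some open neighbourhood of $x$ lies in $\mathcal{\bf I}$, then handling the last clause via $I^{*}=\varnothing$ and the finite additivity $(A\cup B)^{*}=A^{*}\cup B^{*}$ (which the paper itself invokes later, in Theorem \ref{thm.tit.4.8.2}) --- is the standard direct argument and all steps check out, including the implicit use of $\varnothing\in\mathcal{\bf I}$ when closing the loop on the final condition.
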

\begin{theorem}[\cite{HJ1990}]
  \label{thm.tct.4.8}
  Let $(X, \tau, \mathcal{\bf I})$ be an $\mathcal{\bf I}$-space. Then the following statements are equivalent:\\
  $(1)$ $\mathcal{\bf I}\sim \tau$\\
  $(2)$ For every $A\subseteq X$, $A\setminus A^{*}\in\mathcal{\bf I}$
\end{theorem}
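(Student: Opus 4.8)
The plan is to prove the equivalence by directly unwinding the two definitions involved and observing that the negation of membership in the local function is exactly the local condition appearing in Njåstad's compatibility criterion. The pivotal observation, which I would record first, is the following reformulation of the local function: for any $A\subseteq X$ and any point $x\in X$, we have $x\notin A^{*}$ if and only if there exists an open set $O\in\tau(x)$ with $O\cap A\in\mathcal{\bf I}$. This is immediate from the definition $A^{*}=\{x\in X:U_{x}\cap A\notin\mathcal{\bf I}\ \forall U_{x}\in\tau(x)\}$, and it is the single fact that drives both implications.

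For the implication $(1)\Rightarrow(2)$, I would fix an arbitrary $A\subseteq X$ and set $B=A\setminus A^{*}$, aiming to show $B\in\mathcal{\bf I}$ via compatibility. Each point $x\in B$ lies in $A$ but not in $A^{*}$, so by the reformulation above there is some $O_{x}\in\tau(x)$ with $O_{x}\cap A\in\mathcal{\bf I}$. Since $B\subseteq A$, the hereditary property of $\mathcal{\bf I}$ gives $O_{x}\cap B\subseteq O_{x}\cap A\in\mathcal{\bf I}$, hence $O_{x}\cap B\in\mathcal{\bf I}$. Thus every $x\in B$ admits an open neighbourhood whose intersection with $B$ lies in $\mathcal{\bf I}$, which is precisely the hypothesis of the compatibility condition applied to the set $B$. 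Invoking $\mathcal{\bf I}\sim\tau$ then yields $B=A\setminus A^{*}\in\mathcal{\bf I}$, as required. The only genuinely non-formal step here is the passage from $O_{x}\cap A\in\mathcal{\bf I}$ to $O_{x}\cap B\in\mathcal{\bf I}$, which hinges on remembering to use the hereditary (downward-closed) property rather than leaving the neighbourhoods witnessing $A$ and applying compatibility to $A$ itself.

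For the converse $(2)\Rightarrow(1)$, I would take a set $A\subseteq X$ satisfying the antecedent of compatibility, namely that for every $x\in A$ there is some $O\in\tau(x)$ with $O\cap A\in\mathcal{\bf I}$, and show $A\in\mathcal{\bf I}$. By the reformulation, the antecedent says exactly that no point of $A$ belongs to $A^{*}$, i.e.\ $A\cap A^{*}=\varnothing$. Equivalently $A\subseteq X\setminus A^{*}$, so that $A=A\setminus A^{*}$. Assumption $(2)$ now applies to this particular $A$ and gives $A\setminus A^{*}\in\mathcal{\bf I}$, whence $A\in\mathcal{\bf I}$. This establishes $\mathcal{\bf I}\sim\tau$.

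I do not anticipate a substantial obstacle, since both directions reduce to the single set-theoretic identification $x\notin A^{*}\Leftrightarrow\exists\,O\in\tau(x)\,(O\cap A\in\mathcal{\bf I})$; the proof is essentially a careful bookkeeping of this equivalence together with one application of the hereditary property. The place most likely to invite a slip is the forward direction, where one must apply compatibility to the set $B=A\setminus A^{*}$ and not to $A$, and therefore must first shrink the witnessing intersections from $O_{x}\cap A$ down to $O_{x}\cap B$ using that $\mathcal{\bf I}$ is closed under subsets.
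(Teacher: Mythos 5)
Your proof is correct: the single reformulation $x\notin A^{*}\Leftrightarrow\exists\,O\in\tau(x)\,(O\cap A\in\mathcal{\bf I})$ does drive both directions, the hereditary shrinking from $O_{x}\cap A$ to $O_{x}\cap B$ is exactly what licenses applying compatibility to $B=A\setminus A^{*}$ rather than to $A$, and in the converse the identity $A=A\setminus A^{*}$ (from $A\cap A^{*}=\varnothing$) correctly reduces Nj{\aa}stad's condition to hypothesis $(2)$. The paper itself gives no proof --- it merely recalls this theorem from \cite{HJ1990} --- and your argument is the standard one from that source, so you have correctly supplied what the paper leaves as a citation.
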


\begin{theorem}
  \label{thm.tit.4.8}
  Let $(X, \tau, \mathcal{\bf I})$ be an $\mathcal{\bf I}$-space with $\mathcal{\bf I}\sim \tau$ and $(X, f)$ is a dynamical system.  If $f$ is $\mathcal{\bf I}$-transitive, then $X\setminus(A\setminus {A^*})$ is  $\mathcal{\bf I}$-dense (respectively, $*$-dense) for any $A\subset X$.
\end{theorem}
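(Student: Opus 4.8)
The plan is to strip the statement down to a single fact about ideal members and then invoke the codenseness that $\mathcal{\bf I}$-transitivity forces. First I would set $I := A \setminus A^*$ and observe that the compatibility hypothesis $\mathcal{\bf I} \sim \tau$, via Theorem \ref{thm.tct.4.8}, gives $I \in \mathcal{\bf I}$ for every $A \subseteq X$. Thus the target set is $X \setminus I$ with $I$ an arbitrary member of $\mathcal{\bf I}$, and the whole theorem reduces to the claim that, in the present setting, the complement of any ideal member is $\mathcal{\bf I}$-dense.

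Next I would extract codenseness from the dynamics. Since $f$ is $\mathcal{\bf I}$-transitive, Theorem \ref{thm.tit.4.7} tells us that $(X, \tau, \mathcal{\bf I})$ is a Hayashi-Samuel space, i.e. $\tau \cap \mathcal{\bf I} = \{\varnothing\}$, so no non-empty open set lies in $\mathcal{\bf I}$. To show $X \setminus I$ is $\mathcal{\bf I}$-dense I would use the characterisation (recorded in Section~\ref{sec2}) that a set is $\mathcal{\bf I}$-dense if and only if it meets every non-empty open set outside $\mathcal{\bf I}$. Arguing by contradiction, suppose $(X \setminus I) \cap O \in \mathcal{\bf I}$ for some non-empty open $O$. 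Writing $O = (O \cap I) \cup (O \cap (X \setminus I))$, the first piece lies in $\mathcal{\bf I}$ by the hereditary property and the second by assumption, so finite additivity yields $O \in \mathcal{\bf I}$, contradicting codenseness. Hence $(X \setminus I) \cap O \notin \mathcal{\bf I}$ for every non-empty open $O$, and therefore $X \setminus I = X \setminus (A \setminus A^*)$ is $\mathcal{\bf I}$-dense.

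For the parenthetical $*$-dense conclusion I would simply appeal to the implication noted in Section~\ref{sec2} that every $\mathcal{\bf I}$-dense subset of $X$ is $*$-dense, so no separate argument is required. I do not anticipate a genuine obstacle; the one point to handle carefully is the decomposition step, where both pieces of $O$ must be placed in $\mathcal{\bf I}$ so that \emph{finite additivity}, rather than mere hereditariness, delivers $O \in \mathcal{\bf I}$. The conceptual content lies entirely in recognising that compatibility turns $A \setminus A^*$ into an ideal set and that $\mathcal{\bf I}$-transitivity is exactly strong enough to guarantee codenseness, after which the asserted density is a short ideal computation.
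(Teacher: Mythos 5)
Your proposal is correct and follows essentially the same route as the paper: both first use Theorem~\ref{thm.tit.4.7} to get codenseness from $\mathcal{\bf I}$-transitivity and Theorem~\ref{thm.tct.4.8} (compatibility) to place $A\setminus A^{*}$ in $\mathcal{\bf I}$, reducing everything to the claim that the complement of an ideal member is $\mathcal{\bf I}$-dense. The only difference is that the paper finishes by citing the operator identity $\psi(I)=\varnothing$ for $I\in\mathcal{\bf I}$ from Theorem~\ref{thm.tct.4.8.1}, whereas you prove that same fact inline via the decomposition $O=(O\cap I)\cup(O\setminus I)$ and finite additivity --- an equivalent, slightly more self-contained computation.
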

\begin{proof}
 Let $f$ is $\mathcal{\bf I}$-transitive. Then by Theorem \ref{thm.tit.4.7}, the ideal $\mathcal{\bf I}$ is codense i.e., $\mathcal{\bf I}$ is $\tau$-boundary . Since $\mathcal{\bf I}\sim \tau$, then by Theorem \ref{thm.tct.4.8}, for every $A\subseteq X$, $A\setminus A^{*}\in\mathcal{\bf I}$. Also since $\mathcal{\bf I}$ is $\tau$-boundary, then by Theorem \ref{thm.tct.4.8.1}, $\psi(A\setminus A^{*})=\varnothing$. This implies, $X\setminus(X\setminus(A\setminus A^{*}))^{*}=\varnothing$ and hence $X=(X\setminus(A\setminus A^{*}))^{*}$. Thus $X\setminus(A\setminus A^{*})$ is $\mathcal{\bf I}$-dense for every $A\subset X$. This completes the proof.
 \end{proof}

\begin{theorem}
   Let $(X, \tau, \mathcal{\bf I})$ be an $\mathcal{\bf I}$-space with $\mathcal{\bf I}\sim \tau$ and $(X, f)$ is a dynamical system.  If $f$ is $\mathcal{\bf I}$-transitive, then $X\setminus(\psi(A)\setminus A)$ is  $\mathcal{\bf I}$-dense (respectively, $*$-dense) for any $A\subset X$.
\end{theorem}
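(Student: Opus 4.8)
The plan is to mirror the proof of Theorem~\ref{thm.tit.4.8}, replacing the set $A\setminus A^{*}$ by $\psi(A)\setminus A$ and invoking the companion membership result for $\psi$ in place of the one for the local function. First I would extract from the hypothesis that $f$ is $\mathcal{\bf I}$-transitive: by Theorem~\ref{thm.tit.4.7} the space $(X,\tau,\mathcal{\bf I})$ is a Hayashi-Samuel space, so $\mathcal{\bf I}$ is codense, equivalently $\tau$-boundary.

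Next I would use the compatibility assumption $\mathcal{\bf I}\sim\tau$. By the noted property of the operator $\psi$ under compatibility, namely $\psi(A)\setminus A\in\mathcal{\bf I}$ for every $A\subseteq X$, the set $\psi(A)\setminus A$ is a member of the ideal. This is precisely where the present statement diverges from Theorem~\ref{thm.tit.4.8}: there one appeals to $A\setminus A^{*}\in\mathcal{\bf I}$ (Theorem~\ref{thm.tct.4.8}), whereas here one appeals to the dual membership $\psi(A)\setminus A\in\mathcal{\bf I}$.

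With $\psi(A)\setminus A\in\mathcal{\bf I}$ in hand, I would apply the codenseness characterization (Theorem~\ref{thm.tct.4.8.1}): since $\mathcal{\bf I}$ is $\tau$-boundary, $\psi(I)=\varnothing$ for every $I\in\mathcal{\bf I}$. Taking $I=\psi(A)\setminus A$ gives $\psi(\psi(A)\setminus A)=\varnothing$. Finally I would unwind the definition $\psi(M)=X\setminus(X\setminus M)^{*}$ with $M=\psi(A)\setminus A$: the equation $\psi(\psi(A)\setminus A)=\varnothing$ reads $X\setminus\bigl(X\setminus(\psi(A)\setminus A)\bigr)^{*}=\varnothing$, i.e. $\bigl(X\setminus(\psi(A)\setminus A)\bigr)^{*}=X$, which is exactly the definition of $\mathcal{\bf I}$-denseness of $X\setminus(\psi(A)\setminus A)$. (Equivalently, one may cite the characterization that a set is $\mathcal{\bf I}$-dense iff $\psi$ of its complement is empty, which is literally the identity $\psi(\psi(A)\setminus A)=\varnothing$ just established.) The parenthetical $*$-dense claim then follows immediately, since every $\mathcal{\bf I}$-dense subset is $*$-dense.

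Because the argument is a direct transcription of the preceding theorem with a single substitution, I do not anticipate a genuine obstacle. The only point demanding care is citing the correct membership lemma, $\psi(A)\setminus A\in\mathcal{\bf I}$ rather than $A\setminus A^{*}\in\mathcal{\bf I}$, and confirming that the $\tau$-boundary hypothesis is available — which it is, via $\mathcal{\bf I}$-transitivity and Theorem~\ref{thm.tit.4.7} — so that $\psi$ annihilates members of $\mathcal{\bf I}$.
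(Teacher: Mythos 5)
Your proposal is correct and follows essentially the same route as the paper: invoke Theorem \ref{thm.tit.4.7} to get that $\mathcal{\bf I}$ is $\tau$-boundary, use compatibility to get $\psi(A)\setminus A\in\mathcal{\bf I}$, conclude $\psi(\psi(A)\setminus A)=\varnothing$, and read off $\mathcal{\bf I}$-denseness of the complement. Your version is in fact slightly more complete, since you explicitly unwind the definition of $\psi$ in the last step, which the paper leaves implicit.
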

 \begin{proof}
  Let $f$ is $\mathcal{\bf I}$-transitive. Then by Theorem \ref{thm.tit.4.7}, the ideal $\mathcal{\bf I}$ is codense i.e., $\mathcal{\bf I}$ is $\tau$-boundary. Since $\mathcal{\bf I}\sim \tau$, then $\psi(A)\setminus A\in \mathcal{\bf I}$  \cite{HJ1990} for every $A\subseteq X$. Since $\mathcal{\bf I}$ is $\tau$-boundary, $\psi(\psi(A)\setminus A)=\varnothing$. This implies, $X\setminus(\psi(A)\setminus A)$ is  $\mathcal{\bf I}$-dense for any $A\subset X$. This completes the proof.
 \end{proof}
\begin{theorem}
\label{thm.tit.4.8.2}
   Let $(X, \tau, \mathcal{\bf I})$ be an $\mathcal{\bf I}$-space and $(X, f)$ be a dynamical system.  If $f$ is $\mathcal{\bf I}$-transitive, then $X\setminus A^*\subset (X\setminus A)^*$ for any $A\subset X$.
\end{theorem}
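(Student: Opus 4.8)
The plan is to argue by a direct point-chase from the definition of the local function, with the crucial leverage coming from the fact that $\mathcal{\bf I}$-transitivity forces $\mathcal{\bf I}$ to be codense. By Theorem \ref{thm.tit.4.7}, since $f$ is $\mathcal{\bf I}$-transitive the space $(X,\tau,\mathcal{\bf I})$ is Hayashi-Samuel, so $\tau\cap\mathcal{\bf I}=\{\varnothing\}$; equivalently, every non-empty open set fails to lie in $\mathcal{\bf I}$. This single consequence is what drives the inclusion, and the hypothesis on $f$ is used only through it.

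First I would fix $A\subseteq X$ and take an arbitrary point $x\in X\setminus A^{*}$. Unwinding the definition of $A^{*}$, the statement $x\notin A^{*}$ means there is some $U_{x}\in\tau(x)$ with $U_{x}\cap A\in\mathcal{\bf I}$. The goal is then to show $x\in(X\setminus A)^{*}$, that is, $V\cap(X\setminus A)\notin\mathcal{\bf I}$ for every $V\in\tau(x)$.

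Next I would fix an arbitrary $V\in\tau(x)$ and set $W=U_{x}\cap V$. Then $W$ is open and contains $x$, so $W\neq\varnothing$, and hence by codenseness $W\notin\mathcal{\bf I}$. Writing $W=(W\cap A)\cup(W\cap(X\setminus A))$, note $W\cap A\subseteq U_{x}\cap A\in\mathcal{\bf I}$, so $W\cap A\in\mathcal{\bf I}$ by heredity. If $W\cap(X\setminus A)$ were also in $\mathcal{\bf I}$, then finite additivity would give $W\in\mathcal{\bf I}$, contradicting $W\notin\mathcal{\bf I}$. Thus $W\cap(X\setminus A)\notin\mathcal{\bf I}$, and since $W\cap(X\setminus A)\subseteq V\cap(X\setminus A)$, heredity forces $V\cap(X\setminus A)\notin\mathcal{\bf I}$. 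As $V\in\tau(x)$ was arbitrary, $x\in(X\setminus A)^{*}$, and therefore $X\setminus A^{*}\subseteq(X\setminus A)^{*}$.

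I do not expect a genuine obstacle here: the only real content is recognizing that $\mathcal{\bf I}$-transitivity enters solely via codenseness through Theorem \ref{thm.tit.4.7}, after which the argument reduces to splitting a non-ideal open set into an ideal part and a remainder and then applying heredity and finite additivity. The one point to keep explicit is that the full strength of ``$f$ is $\mathcal{\bf I}$-transitive'' is not needed — the proof never touches $f$ or its iterates — so it would be honest to remark that codenseness of $\mathcal{\bf I}$ alone suffices for the conclusion.
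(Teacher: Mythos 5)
Your proof is correct. Every step checks out: from $x\notin A^{*}$ you extract $U_{x}\in\tau(x)$ with $U_{x}\cap A\in\mathcal{\bf I}$, and for an arbitrary $V\in\tau(x)$ the set $W=U_{x}\cap V$ is a non-empty open set, hence not in $\mathcal{\bf I}$ by the codenseness supplied by Theorem \ref{thm.tit.4.7}; the decomposition $W=(W\cap A)\cup(W\cap(X\setminus A))$ together with heredity and finite additivity then forces $V\cap(X\setminus A)\notin\mathcal{\bf I}$. The paper reaches the same conclusion by a shorter, citation-based route: codenseness gives $X=X^{*}$ (Theorem \ref{thm.tct.4.8.1}), and the known finite additivity of the local function gives $A^{*}\cup(X\setminus A)^{*}=(A\cup(X\setminus A))^{*}=X^{*}=X$, from which the inclusion is immediate. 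Your argument is essentially an unpacked, self-contained proof of that identity in the special case of complementary sets, using only the ideal axioms rather than quoting the operator identity; what it buys is transparency and independence from the cited property of $(\cdot)^{*}$, at the cost of a few extra lines. Your closing remark is also accurate and applies equally to the paper's proof: the hypothesis on $f$ enters only through the codenseness of $\mathcal{\bf I}$, so the conclusion holds for any Hayashi--Samuel space regardless of the dynamics.
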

\begin{proof}
  Let $f$ is $\mathcal{\bf I}$-transitive. Then by Theorem \ref{thm.tit.4.7}, the ideal $\mathcal{\bf I}$ is codense i.e., $\mathcal{\bf I}$ is $\tau$-boundary. Hence $X=X^*$ by Theorem \ref{thm.tct.4.8.1}. Now, $A^*\cup (X\setminus A)^*=(A\cup (X\setminus A))^*=X^*=X$ \cite{JH1990} and hence $X\setminus A^*\subset (X\setminus A)^*$. Thus, $X\setminus A^*\subset (X\setminus A)^*$ for every $A\subset X$. This, completes the proof.
  \end{proof}
\begin{theorem}
   Let $(X, \tau, \mathcal{\bf I})$ be an $\mathcal{\bf I}$-space and $(X, f)$ is a dynamical system.  If $f$ is $\mathcal{\bf I}$-transitive, then $\tau\subset \mathcal{\bf I}O(X)$.
\end{theorem}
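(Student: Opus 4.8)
The plan is to reduce the statement to a fact already recorded in this setting --- namely, that in a Hayashi-Samuel space every open set is contained in its own local function --- and then to pass to interiors. Recall that a set $A$ belongs to $\mathcal{\bf I}O(X)$ precisely when $A \subseteq Int(A^*)$, so for a fixed open set $A \in \tau$ it suffices to produce the inclusion $A \subseteq Int(A^*)$, and then to note that $A$ was arbitrary.

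First I would invoke Theorem \ref{thm.tit.4.7}: since $f$ is $\mathcal{\bf I}$-transitive, the space $(X,\tau,\mathcal{\bf I})$ is Hayashi-Samuel, equivalently $\mathcal{\bf I}$ is $\tau$-boundary (codense). This is the only point at which the dynamical hypothesis is used; everything afterwards is a purely pointset statement about the local function. Next I would apply the equivalence recorded in Theorem \ref{thm.tct.4.8.1}, one of whose clauses reads ``for every $A \in \tau$, $A \subseteq A^*$''. This gives directly that every open set sits inside its local function.

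Fixing $A \in \tau$, we therefore have $A \subseteq A^*$. Applying the monotone interior operator to both sides yields $Int(A) \subseteq Int(A^*)$, and since $A$ is open, $Int(A) = A$; hence $A \subseteq Int(A^*)$. By definition this says $A \in \mathcal{\bf I}O(X)$, and as $A$ was an arbitrary member of $\tau$, we conclude $\tau \subseteq \mathcal{\bf I}O(X)$.

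There is no substantial obstacle in this argument: the entire content is absorbed into Theorem \ref{thm.tit.4.7} (transitivity forces codenseness) and Theorem \ref{thm.tct.4.8.1} (codenseness forces $A \subseteq A^*$ for open $A$). The one point requiring care is the role of the interior: the raw inclusion $A \subseteq A^*$ is not by itself enough for $\mathcal{\bf I}$-openness, since $A^*$ need not be open, so the step $A = Int(A) \subseteq Int(A^*)$ --- exploiting both monotonicity of $Int$ and the openness of $A$ --- is exactly what upgrades the weaker inclusion to membership in $\mathcal{\bf I}O(X)$.
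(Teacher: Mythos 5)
Your proof is correct and follows essentially the same route as the paper's: both deduce codenseness from Theorem \ref{thm.tit.4.7}, invoke the clause of Theorem \ref{thm.tct.4.8.1} giving $A\subseteq A^{*}$ for open $A$, and then pass to interiors via $A=Int(A)\subseteq Int(A^{*})$ to conclude $A\in\mathcal{\bf I}O(X)$. No differences worth noting.
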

\begin{proof}
  Let $f$ is $\mathcal{\bf I}$-transitive. Then by Theorem \ref{thm.tit.4.7}, $\mathcal{\bf I}\cap \tau=\{\varnothing\}$. Let $A\in\tau$, then by Theorem \ref{thm.tct.4.8.1}, $A\subset A^*$. This shows that $A\subset Int(A)\subset Int(A^*)$ and hence $A\in \mathcal{\bf I}O(X)$. Consequently, $\tau\subset \mathcal{\bf I}O(X)$. This completes the proof.
\end{proof}
\section{Ideal version of non-wandering point}\label{sec5}
In this section, we have introduced ideal version of non-wandering points called ideal-non-wandering points and investigate its various characterizations and properties.
\begin{definition}
Let $(X,f)$ be a dynamical system and $\mathcal{\bf I}$ be an ideal on $X$. A point $x\in X$ is called an ideal non-wandering (or simply $\mathcal{\bf I}$-non-wandering) if for every neighbourhood $M$ containing $x$, there exists a positive integer $n$ such that $f^{n}(M)\cap M\notin\mathcal{\bf I}$. The set of all ideal-non-wandering points is denoted by $\Omega_\mathcal{\bf I}(f)$.
\end{definition}
Every ``$\mathcal{\bf I}$-non-wandering point" is a ``non-wandering" point but the reverse may not be true.
For the existence of ideal non-wandering points and reverse inclusion, we consider the following examples:
\begin{example}
  \label{exm.tit.5.1}
  Consider an $\mathcal{\bf I}$-space $(X, \tau,\mathcal{\bf I})$  where $X=\{a, b, c\}$, $\tau=\{\varnothing,\{a\}, \{b\}, \{a,b\}, X\}$ and $\mathcal{\bf I}=\{\varnothing, \{c\}\}$. Take, $a\in X$. Then the neighbourhoods of $a$ are $\{a\}, \{a,b\}$ and $X$. Let us define a mapping $f:X \rightarrow X$ by $f(a)=b$, $f(b)=a$ and $f(c)=c$. Then for any  neighbourhood $M$ of $a$, $f^{n}(M)\cap M\notin\mathcal{\bf I}$ where $n=2$. Thus `$a$' is an $\mathcal{\bf I}$-non-wandering point of $X$.
\end{example}

\begin{example}
  \label{exm.tit.5.2}
  Consider an $\mathcal{\bf I}$-space $(X, \tau,\mathcal{\bf I})$  where $X=\{a, b, c\}$, $\tau=\{\varnothing,\{a\}, \{b\}, \{a,b\}, X\}$ and $\mathcal{\bf I}=\{\varnothing, \{a\}, \{b\}, \{a,b\}\}$. Take, $a\in X$. Then the neighbourhoods of $a$ are $\{a\}, \{a,b\}$ and $X$. Let us define a mapping $f:X \rightarrow X$ by $f(a)=b$, $f(b)=a$ and $f(c)=c$. Then for any  neighbourhood $M$ of $a$, $f^{n}(M)\cap M\neq\varnothing$  where $n=2$. Thus `$a$' is a non-wandering point of $X$. But for all $n\in\mathbb{N}$, $f^{n}(M)\cap M\in\mathcal{\bf I}$ when $M=\{a\}$ and $M=\{a,b\}$. Thus `$a$' is not an $\mathcal{\bf I}$-non-wandering point of $X$.
\end{example}

\begin{example}
  \label{exm.tit.5.3}
  Consider an $\mathcal{\bf I}$-space $(X, \tau,\mathcal{\bf I})$  where $X=\mathbb{N}$, set of all natural numbers, $\tau=\tau_{u}$, usual topology on $\mathbb{N}$ and $\mathcal{\bf I}=\mathcal{P}(\mathbb{N}\setminus \bf O)$, (power set of $(\mathbb{N}\setminus \bf O)$) where $\bf O$ is set of all odd natural numbers.  Let us define a mapping $f:X \rightarrow X$ by $f(k)=k$ for all $k\in X$. Then all odd natural numbers are non-wandering points of $\mathbb{N}$ but not $\mathcal{\bf I}$-non-wandering point of $\mathbb{N}$.
\end{example}
If a point $x\in X$ is a $\mathcal{\bf I}$-non-wandering point (i.e., $x\in\Omega_\mathcal{\bf I}(f)$), then no open set containing $x$ (or neighbourhood of $x$) is a member of $\mathcal{\bf I}$.\\
For the reverse inclusion, we consider the following:
\begin{theorem}
  Let $\mathcal{\bf I}$ be a codense ideal on a space $(X, \tau)$  and $(X, f)$ be a dynamical system. If $f$ is an open map, then any non-wandering point of $X$ is an $\mathcal{\bf I}$-non-wandering point.
\end{theorem}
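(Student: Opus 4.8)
The plan is to mirror the argument used for Theorem~\ref{thm.tit.4.1}, replacing the pair of open sets by a single neighbourhood of the point $x$, and exploiting the same two ingredients: openness of the iterates of $f$ together with codenseness of $\mathcal{\bf I}$. First I would fix a non-wandering point $x$ and an arbitrary neighbourhood $M$ of $x$, and then shrink to an open set $U$ with $x\in U\subseteq M$, which exists by the very definition of a neighbourhood. Since $U$ is itself a neighbourhood of $x$, the non-wandering hypothesis applied to $U$ yields a positive integer $n$ with $f^{n}(U)\cap U\neq\varnothing$.

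Next I would observe that, because $f$ is open, every iterate $f^{n}$ is open as well; this is a routine induction, using that the composition of two open maps is open. Consequently $f^{n}(U)$ is open, and $f^{n}(U)\cap U$ is a non-empty open subset of $X$. By codenseness of $\mathcal{\bf I}$, that is $\tau\cap\mathcal{\bf I}=\{\varnothing\}$, no non-empty open set lies in $\mathcal{\bf I}$, so $f^{n}(U)\cap U\notin\mathcal{\bf I}$.

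Finally I would transfer this non-membership from $U$ back up to $M$. Since $U\subseteq M$ we have $f^{n}(U)\subseteq f^{n}(M)$, and therefore $f^{n}(U)\cap U\subseteq f^{n}(M)\cap M$; as the smaller set is not in $\mathcal{\bf I}$, the hereditary property of $\mathcal{\bf I}$ (in contrapositive form) forces $f^{n}(M)\cap M\notin\mathcal{\bf I}$. Because $M$ was an arbitrary neighbourhood of $x$, this shows $x\in\Omega_{\mathcal{\bf I}}(f)$, completing the proof. The one step that needs care is precisely the passage to the open set $U$: the non-wandering conclusion only provides non-emptiness of $f^{n}(U)\cap U$, so to invoke codenseness I must keep the set open, which is why I work with an open $U\subseteq M$ rather than applying the hypothesis directly to the possibly non-open $M$ (for which $f^{n}(M)\cap M$ need not be open, and codenseness would not apply).
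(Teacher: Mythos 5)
Your proposal is correct and follows essentially the same argument as the paper: apply the non-wandering hypothesis, use openness of $f$ (hence of $f^{n}$) to see that the resulting non-empty intersection is open, and conclude by codenseness that it is not in $\mathcal{\bf I}$. The only difference is that you carefully shrink an arbitrary neighbourhood $M$ to an open $U\subseteq M$ and then pass back up via the hereditary property, whereas the paper works directly with open neighbourhoods; this is a minor (and welcome) refinement, not a different route.
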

\begin{proof}
Let $x\in X$ be a non-wandering point. Then for every non-empty open set $M$ of $x$, there exists a positive integer $k$ such that $f^{k}(M)\cap M\neq\varnothing$. Since $f$ is open, then $f^{k}(M)\cap M\in\tau$. This implies $f^{k}(M)\cap M\notin\mathcal{\bf I}$ since the ideal $\mathcal{\bf I}$ is codense. Thus, $x$ is an $\mathcal{\bf I}$-non-wandering point. This completes the proof.
\end{proof}
\begin{definition}
Let $(X,f)$ be a dynamical system and $\mathcal{\bf I}$ be an ideal on $X$. The dynamical system $(X,f)$ is said to have $\mathcal{\bf I}$-dense orbit with respect to the ideal $\mathcal{\bf I}$ if there exists a point $x\in X$ such that $A^{*}(\mathcal{\bf I})=X$ where $A$ is an  orbit of $x$.
\end{definition}
For the existence of $\mathcal{\bf I}$-dense orbit, we consider the following examples:
\begin{example}
  \label{exm.tit.5.6}
  Consider an $\mathcal{\bf I}$-space $(X, \tau,\mathcal{\bf I})$  where $X=\{a,b\}$, $\tau=\{\varnothing, X, \{a\}\}$,  and $\mathcal{\bf I}=\{\varnothing, \{b\}\}$.  Let us define a mapping $f:X \rightarrow X$ by $f(a)=f(b)=a$. Then the orbit of $b$ is $\mathcal{\bf I}$-dense.
\end{example}
\begin{example}
  \label{exm.tit.5.7}
  Suppose $X= \{0\}\cup\{\frac{1}{n} : n \in \mathbb{N}\}$ (where $\mathbb{N}$ denotes the set of all natural numbers) is a set endowed with the usual topology and consider an ideal $\mathcal{\bf I}=\{\varnothing, \{0\}\}$. Let us define a mapping $f:X \rightarrow X$ by $f(\frac{1}{n})=\frac{1}{(n+1)}$, $n=1, 2, 3,...$ and $f(0)=0$. Clearly $f$ is open and continuous. Then, the orbit of the point $x=1$ is $\mathcal{\bf I}$-dense.
\end{example}

\begin{theorem}[\cite{DGR1999}]
\label{thm.tit.4.8}
  For an $\mathcal{\bf I}$-space $(X,\tau,\mathcal{\bf I})$, the following statements are equivalent:\\
  $(1)$ $\mathcal{\bf I}$ is completely codense.\\
  $(2)$ Every dense set is $\mathcal{\bf I}$-dense.
\end{theorem}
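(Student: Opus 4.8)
The plan is to establish the two implications separately, using throughout the stated characterization that $\mathcal{\bf I}$ is completely codense if and only if each member of $\mathcal{\bf I}$ is nowhere dense, together with the open-set reformulations recorded earlier: $A$ is dense iff it meets every non-empty open set, and $A$ is $\mathcal{\bf I}$-dense iff $A\cap O\notin\mathcal{\bf I}$ for every non-empty open $O$.

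For $(1)\implies(2)$ I would argue by contradiction. Suppose $\mathcal{\bf I}$ is completely codense and $A$ is dense but not $\mathcal{\bf I}$-dense. Then there is a non-empty open set $O$ with $A\cap O\in\mathcal{\bf I}$, so $A\cap O$ is nowhere dense and $Int(Cl(A\cap O))=\varnothing$. On the other hand, since $A$ meets every non-empty open subset of $O$, the set $A\cap O$ is dense in $O$, giving $O\subseteq Cl(A\cap O)$ and hence $Int(Cl(A\cap O))\supseteq Int(O)=O\neq\varnothing$. This contradiction closes the direction, which is essentially routine once the two reformulations are in place.

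For $(2)\implies(1)$ I would prove the contrapositive by constructing an explicit witness. Assume $\mathcal{\bf I}$ is not completely codense, so some $I\in\mathcal{\bf I}$ is not nowhere dense; put $U:=Int(Cl(I))\neq\varnothing$. The key observation is that $I$ is dense in $U$: every non-empty open $V\subseteq U\subseteq Cl(I)$ meets $I$. I would then take $A:=I\cup(X\setminus Cl(I))$ and verify two things. First, $A$ is dense: for a non-empty open $W$, either $W$ meets the open set $X\setminus Cl(I)$, or else $W\subseteq Cl(I)$, which for open $W$ forces $W\subseteq Int(Cl(I))=U$ and hence $W\cap I\neq\varnothing$. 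Second, $A$ is not $\mathcal{\bf I}$-dense: since $U\cap(X\setminus Cl(I))=\varnothing$, we get $A\cap U=I\cap U\subseteq I\in\mathcal{\bf I}$, so by the hereditary property $A\cap U\in\mathcal{\bf I}$ while $U$ is non-empty and open. This contradicts $(2)$.

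The main obstacle is precisely the construction in the reverse direction: one must exhibit a set that is globally dense yet whose intersection with some fixed open set falls into $\mathcal{\bf I}$. The device is to fill the complement of $Cl(I)$ with the open set $X\setminus Cl(I)$ to force global denseness, while on the regular-open piece $U=Int(Cl(I))$ the set collapses to $I\cap U$, which is trapped inside $I$ and therefore ideal-small. The decisive point that makes the two pieces non-interfering is the disjointness $U\cap(X\setminus Cl(I))=\varnothing$, and this is the step I would check most carefully.
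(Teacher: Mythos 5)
Your proof is correct. Note, however, that the paper does not actually prove this theorem: it is quoted verbatim from Dontchev--Ganster--Rose \cite{DGR1999} as a known result (the paper only uses it to rectify Remark 6.9(2) of \cite{LL2013}), so there is no in-paper argument to compare against. Your self-contained derivation checks out in both directions. For $(1)\implies(2)$ the key step --- that a globally dense $A$ satisfies $O\subseteq Cl(A\cap O)$ for open $O$, so $A\cap O\in\mathcal{\bf I}$ would contradict nowhere density --- is sound. For $(2)\implies(1)$ the witness $A=I\cup(X\setminus Cl(I))$ with $U=Int(Cl(I))\neq\varnothing$ is exactly the right construction: density follows from the case split $W\cap(X\setminus Cl(I))\neq\varnothing$ versus $W\subseteq Cl(I)\Rightarrow W\subseteq U\Rightarrow W\cap I\neq\varnothing$, and the failure of $\mathcal{\bf I}$-density follows from $A\cap U=I\cap U\in\mathcal{\bf I}$ by heredity, using the disjointness $U\cap(X\setminus Cl(I))=\varnothing$ that you correctly flag as the decisive point. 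The only dependency worth making explicit is the characterization ``completely codense iff every member of $\mathcal{\bf I}$ is nowhere dense,'' which the paper records (also citing \cite{DGR1999}) and which you are entitled to use; if one wanted a proof from the raw definition $PO(X,\tau)\cap\mathcal{\bf I}=\{\varnothing\}$, one would additionally observe that $I\cap U$ is a non-empty preopen member of $\mathcal{\bf I}$ whenever $I$ fails to be nowhere dense.
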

This theorem is one of the rectification of the Remark $6.9(2)$ of \cite{LL2013}.

\begin{theorem}[\cite{DK2003}]
  \label{thm.tit.4.9}
  Let $X$ be a compact and Hausdorff space with a countable base and $f:X\rightarrow X$ be a continuous mapping. If $f$ is a topologically transitive, then it has a dense orbit.
\end{theorem}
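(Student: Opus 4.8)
The plan is to realize the set of transitive points as a dense $G_\delta$ subset of $X$ and then invoke the Baire category theorem. First I would record the topological preliminaries: since $X$ is compact Hausdorff with a countable base it is second countable, hence (by Urysohn metrization) metrizable; a compact metric space is complete, so $X$ is a Baire space. Fix once and for all a countable base $\{V_k\}_{k=1}^{\infty}$ consisting of non-empty open sets, which exists by hypothesis.

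The key reformulation is that a point $x$ has a dense orbit precisely when the orbit $\{x, f(x), f^2(x),\dots\}$ meets every $V_k$, and the orbit meets $V_k$ iff there is some $n\geq 0$ with $f^n(x)\in V_k$, i.e. $x\in G_k:=\bigcup_{n\geq 0} f^{-n}(V_k)$. Thus $tr(f)=\bigcap_{k} G_k$. Because $f$ is continuous each $f^{-n}(V_k)$ is open, so every $G_k$ is open.

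The heart of the argument, and the only place where topological transitivity is used, is the density of each $G_k$. Given any non-empty open set $U$, I would apply transitivity to the pair $(U,V_k)$ to obtain a positive integer $n$ with $f^{n}(U)\cap V_k\neq\varnothing$; choosing $u\in U$ with $f^{n}(u)\in V_k$ gives $u\in U\cap f^{-n}(V_k)\subseteq U\cap G_k$, so $U\cap G_k\neq\varnothing$. Hence each $G_k$ is open and dense.

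Finally, since $X$ is a Baire space, the countable intersection $\bigcap_{k} G_k$ of dense open sets is dense, in particular non-empty. Any $x$ in this intersection has an orbit meeting every basic open set $V_k$, and therefore meeting every non-empty open set (as $\{V_k\}$ is a base), so the orbit of $x$ is dense. I expect the only delicate point to be the justification that $X$ is a Baire space; this can be secured either through compactness plus metrizability yielding a complete metric, or directly from the fact that every compact Hausdorff space is Baire. Once the $G_\delta$ formulation is in place, the remaining steps are routine.
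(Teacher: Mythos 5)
Your argument is correct: it is the standard Birkhoff-type proof that for a second countable Baire space the set of transitive points is the dense $G_\delta$ set $\bigcap_k\bigcup_{n\ge 0}f^{-n}(V_k)$, with transitivity used exactly once to show each $\bigcup_{n\ge 0}f^{-n}(V_k)$ is dense, and the Baire property of $X$ secured either via compact Hausdorff directly or via Urysohn metrization plus completeness. Note that the paper itself offers no proof of this statement --- it is recalled from the cited reference of De\v{g}irmenci and Ko\c{c}ak --- so there is nothing to compare against; your writeup supplies a complete and correct justification of the quoted result.
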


\begin{proposition}
\label{prop.tit.02}
   Let $\mathcal{\bf I}$ be a completely codense ideal on a compact and Hausdorff space $X$  with a countable base and $f:X\rightarrow X$ be a continuous mapping. If $f$ is a $\mathcal{\bf I}$-transitive, then it has an $\mathcal{\bf I}$-dense orbit.
\end{proposition}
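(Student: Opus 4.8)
The plan is to assemble the proposition from three ingredients the paper has already established, so that it reduces to a short chaining argument rather than a new construction. The guiding idea is that $\mathcal{\bf I}$-transitivity is stronger than ordinary topological transitivity, while complete codenseness of $\mathcal{\bf I}$ is exactly the property that promotes a dense set to an $\mathcal{\bf I}$-dense set (this is the rectified form of Remark $6.9(2)$, recorded as Theorem \ref{thm.tit.4.8}). Consequently a dense orbit obtained by classical dynamics will automatically be $\mathcal{\bf I}$-dense, and the proposition follows.

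First I would invoke the observation, made in the text just before Theorem \ref{thm.tit.4.1}, that every $\mathcal{\bf I}$-transitive map is topologically transitive; hence $f$ is topologically transitive on $X$. Next, since $X$ is compact, Hausdorff and possesses a countable base, and $f$ is continuous and topologically transitive, Theorem \ref{thm.tit.4.9} furnishes a point $x\in X$ whose orbit $A=orb_{f}(x)=\{x, f(x), f^{2}(x),\ldots\}$ is dense in $X$. Finally, because $\mathcal{\bf I}$ is completely codense, the equivalence in Theorem \ref{thm.tit.4.8} guarantees that every dense subset of $X$ is $\mathcal{\bf I}$-dense; applying this to the dense set $A$ yields $A^{*}(\mathcal{\bf I})=X$, which is precisely the statement that $f$ has an $\mathcal{\bf I}$-dense orbit, namely the orbit of $x$.

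The argument carries no genuine obstacle; the only point demanding care is that the hypotheses line up exactly with each cited result. In particular the ideal must be \emph{completely} codense rather than merely codense, since Theorem \ref{thm.tit.4.8} fails under the weaker assumption --- indeed the counterexamples preceding Theorem \ref{thm.tit.4.2} exhibit dense sets that are not $\mathcal{\bf I}$-dense when $\mathcal{\bf I}$ is only codense, so the whole correction to Remark $6.9(2)$ is what legitimizes the last step. One should also confirm that ``countable base'' is indeed the hypothesis Theorem \ref{thm.tit.4.9} requires and that continuity of $f$ has been used only to invoke that theorem; both hold, so no supplementary work is needed.
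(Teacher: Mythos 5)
Your proposal is correct and follows exactly the same three-step chain as the paper's own proof: $\mathcal{\bf I}$-transitivity gives topological transitivity, Theorem \ref{thm.tit.4.9} yields a dense orbit, and Theorem \ref{thm.tit.4.8} (complete codenseness) upgrades it to an $\mathcal{\bf I}$-dense orbit. Your added remarks on why \emph{complete} codenseness, rather than mere codenseness, is the needed hypothesis are accurate and consistent with the paper's counterexamples.
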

\begin{proof}
 Topologically ideal transitivity of $f$, implies $f$ is topologically transitive. Since $X$ is a compact and Hausdorff space with countable base and $f$ is a continuous mapping, then by Theorem \ref{thm.tit.4.9}, $f$ has a dense orbit. Also since $\mathcal{\bf I}$ is  completely codense ideal, then by Theorem \ref{thm.tit.4.8},  dense orbit of $f$ becomes $\mathcal{\bf I}$-dense orbit. Hence $f$ has an $\mathcal{\bf I}$-dense orbit. This completes the proof.
\end{proof}

 \begin{proposition}
\label{prop.tit.03}
   Let $\mathcal{\bf I}$ be an  ideal of nowhere dense subsets of a compact and Hausdorff space $X$  with a countable base and $f:X\rightarrow X$ be a continuous mapping. If $f$ is an $\mathcal{\bf I}$-transitive, then it has an $\mathcal{\bf I}$-dense orbit.
\end{proposition}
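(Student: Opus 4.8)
The plan is to reduce this proposition to the already-established Proposition \ref{prop.tit.02}, whose hypotheses differ from the present ones only in how the ideal is described. Proposition \ref{prop.tit.02} assumes that $\mathcal{\bf I}$ is \emph{completely codense}, whereas here $\mathcal{\bf I}$ is assumed to be an ideal all of whose members are nowhere dense. The first step, therefore, is to observe that these two hypotheses coincide.

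First I would invoke the characterisation recorded among the notations in Section \ref{sec2} (taken from \cite{DGR1999}): an ideal $\mathcal{\bf I}$ on $(X,\tau)$ is completely codense if and only if each member of $\mathcal{\bf I}$ is nowhere dense. Since $\mathcal{\bf I}$ is by hypothesis an ideal consisting of nowhere dense subsets of $X$, this characterisation immediately yields that $\mathcal{\bf I}$ is completely codense.

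Having secured that $\mathcal{\bf I}$ is completely codense, the remaining hypotheses — $X$ compact Hausdorff with a countable base, and $f$ continuous and $\mathcal{\bf I}$-transitive — are precisely those of Proposition \ref{prop.tit.02}. Applying that proposition directly delivers the conclusion that $f$ possesses an $\mathcal{\bf I}$-dense orbit.

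If a self-contained derivation is preferred over a citation of Proposition \ref{prop.tit.02}, the same three ingredients can be chained explicitly: $\mathcal{\bf I}$-transitivity implies ordinary topological transitivity, so by Theorem \ref{thm.tit.4.9} the map $f$ has a dense orbit; and since $\mathcal{\bf I}$ is completely codense, Theorem \ref{thm.tit.4.8} guarantees that every dense set — in particular this dense orbit — is $\mathcal{\bf I}$-dense. The only point requiring any thought is the equivalence of the two formulations of the ideal, and this is a routine appeal to the cited characterisation; there is no genuine obstacle, the proposition being in essence a reformulation of Proposition \ref{prop.tit.02} under the nowhere-dense description of the ideal.
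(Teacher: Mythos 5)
Your argument is correct: the equivalence ``completely codense $\iff$ every member is nowhere dense'' (recorded in Section \ref{sec2} from \cite{DGR1999}) makes the hypotheses here a restatement of those of Proposition \ref{prop.tit.02}, and the conclusion follows either by citing that proposition or by rechaining Theorems \ref{thm.tit.4.9} and \ref{thm.tit.4.8}. The paper omits its proof as ``straightforward,'' and this reduction is evidently the intended argument, so your proposal matches the paper's approach.
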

\begin{proof}
  The proof is straight forward and hence omitted.
\end{proof}

\begin{proposition}
\label{prop.tit.04}
   Let $\mathcal{\bf I}$ be a completely codense ideal on a compact and Hausdorff space $X$  with a countable base and $f:X\rightarrow X$ be a continuous and open mapping. Then, $f$ is an $\mathcal{\bf I}$-transitive if and only if $\Omega_\mathcal{\bf I}(f)=X$ and $f$ has  $\mathcal{\bf I}$-dense (respectively, $*$-dense) orbit.
\end{proposition}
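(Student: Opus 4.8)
The plan is to establish the two implications of the biconditional separately, treating the forward direction as essentially a packaging of earlier results and isolating the real work in the converse.

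For the forward direction, assume $f$ is $\mathcal{\bf I}$-transitive. The existence of an $\mathcal{\bf I}$-dense orbit is immediate from Proposition~\ref{prop.tit.02}, since all of its hypotheses---complete codenseness of $\mathcal{\bf I}$, compactness and Hausdorffness of $X$ with a countable base, and continuity of $f$---are in force; the orbit is then also $*$-dense because every $\mathcal{\bf I}$-dense set is $*$-dense. To obtain $\Omega_{\mathcal{\bf I}}(f)=X$, fix $x\in X$ and a neighbourhood $M$ of $x$ and pick an open set $O$ with $x\in O\subseteq M$. Applying the defining property of $\mathcal{\bf I}$-transitivity to the pair $(O,O)$ yields a positive integer $n$ with $f^{n}(O)\cap O\notin\mathcal{\bf I}$; since $f^{n}(O)\cap O\subseteq f^{n}(M)\cap M$, the hereditary property of $\mathcal{\bf I}$ gives $f^{n}(M)\cap M\notin\mathcal{\bf I}$. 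Thus every $x\in X$ is $\mathcal{\bf I}$-non-wandering, i.e. $\Omega_{\mathcal{\bf I}}(f)=X$.

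For the converse, assume $\Omega_{\mathcal{\bf I}}(f)=X$ and that $f$ has an $\mathcal{\bf I}$-dense (equivalently, $*$-dense) orbit, say the orbit $O$ of a point $x_{0}$; note that $O$ is in particular dense. I would reduce $\mathcal{\bf I}$-transitivity to ordinary topological transitivity: since $\mathcal{\bf I}$ is completely codense it is codense, and since $f$ is open, Theorem~\ref{thm.tit.4.1} upgrades topological transitivity of $f$ to $\mathcal{\bf I}$-transitivity (indeed, $f^{n}(A)\cap B$ is open whenever $f$ is open, so being non-empty already places it outside the codense ideal). Hence it suffices to prove that $f$ is topologically transitive, and for that I would prove the stronger recurrence statement that $O$ meets every non-empty open set infinitely often. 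Granting this, given non-empty open $A$ and $B$, density furnishes a time $a$ with $f^{a}(x_{0})\in A$, and by the recurrence statement there is a time $b>a$ with $f^{b}(x_{0})\in B$, so that $f^{b}(x_{0})\in f^{b-a}(A)\cap B$ and $n=b-a$ witnesses transitivity.

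The main obstacle is exactly this infinite-recurrence claim, which fails for a general dense orbit in the presence of isolated points; the drift map $f(1/n)=1/(n+1)$ of Example~\ref{exm.tit.4.4} has a dense orbit yet is not transitive, and it is ruled out here only because its isolated points are wandering. I would argue by contradiction: suppose some non-empty open $V$ is visited by $O$ only finitely often, and write $O=F\cup T$ where $F$ is the finite initial segment up to the last visit to $V$ and $T$ is the remaining tail, so $T\cap V=\varnothing$. Since $V$ is open, $\overline{T}\subseteq X\setminus V$, and since $O$ is dense while $F$ is finite (hence closed, as $X$ is Hausdorff), $X=\overline{O}=F\cup\overline{T}$; therefore $V\subseteq X\setminus\overline{T}\subseteq F$, so $V$ is a non-empty finite open set and thus contains an isolated point $p$, with $\{p\}$ open. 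Now $\Omega_{\mathcal{\bf I}}(f)=X$ implies $\Omega_{f}=X$ (every $\mathcal{\bf I}$-non-wandering point is non-wandering), so applying non-wandering to the neighbourhood $\{p\}$ gives $m\ge 1$ with $f^{m}(p)=p$; that is, $p$ is periodic. Density of $O$ provides $c$ with $f^{c}(x_{0})=p$, whence $f^{c+km}(x_{0})=p$ for every $k\ge 0$, so $\{p\}\subseteq V$ is visited infinitely often---contradicting the choice of $V$. This closes the recurrence claim, and with it the converse. The crux, and the place where the hypothesis $\Omega_{\mathcal{\bf I}}(f)=X$ is indispensable, is this promotion of isolated points to periodic points.
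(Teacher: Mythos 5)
Your proof is correct, and in the converse direction it takes a genuinely different---and more careful---route than the paper's. The forward direction matches the paper in substance: both invoke Proposition~\ref{prop.tit.02} for the $\mathcal{\bf I}$-dense orbit, and both obtain $\Omega_{\mathcal{\bf I}}(f)=X$ by applying transitivity to a pair of the form $(O,O)$; your version is slightly cleaner, since applying $\mathcal{\bf I}$-transitivity directly to $(O,O)$ and then using heredity of $\mathcal{\bf I}$ avoids the paper's detour through topological transitivity, openness of $f$ and codenseness. The real divergence is in the converse. The paper takes the \emph{least} times $m,n$ at which the dense orbit enters $M$ and $N$, assumes $m<n$, and sets $k=n-m$; this is not a harmless normalization, because the orbit may visit one of the two sets only before its first visit to the other and never again (exactly the failure mode of the drift map in Example~\ref{exm.tit.4.4}), and the paper never actually uses the hypothesis $\Omega_{\mathcal{\bf I}}(f)=X$ to exclude this. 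You close precisely that gap: your recurrence lemma shows that a non-empty open set met only finitely often by a dense orbit must be contained in the finite initial segment, hence is finite and (by the $T_1$ property) contains a point $p$ with $\{p\}$ open; the non-wandering hypothesis applied to $\{p\}$ forces $f^{m}(p)=p$, so $p$ is periodic and is revisited infinitely often, a contradiction. This is the one place where $\Omega_{\mathcal{\bf I}}(f)=X$ is indispensable, and your argument supplies the justification that the paper's converse is missing; the final reduction from topological transitivity back to $\mathcal{\bf I}$-transitivity via openness of $f$ and codenseness of $\mathcal{\bf I}$ (Theorem~\ref{thm.tit.4.1}) agrees with the paper.
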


\begin{proof}
 Suppose $f$ is an $\mathcal{\bf I}$-transitive. Clearly, it has an $\mathcal{\bf I}$-dense orbit, i.e., there exists $x_{0}\in X$ such that $O_{f}(x_{0})$ is $\mathcal{\bf I}$-dense in $X$. Given that $f$ is $\mathcal{\bf I}$-transitive, this implies that $\mathcal{\bf I}$ is codense. Further, $f$ is topological transitive, then for any neighbourhood $M$ of $x$, $f^{n}(M)\cap M\neq\varnothing$. This implies that $f^{n}(M)\cap M\notin \mathcal{\bf I}$ (since $f$ is open) and hence $\Omega_\mathcal{\bf I}(f)=X$.\\

Conversely, suppose $f$ has an $\mathcal{\bf I}$-dense orbit and $\Omega_{\mathcal{\bf I}}(f)=X$ and let $M$, $N$ be two non-empty open subsets of $X$. Let $x\in X$ have an $\mathcal{\bf I}$-dense orbit. Since every $\mathcal{\bf I}$-dense is dense, then the orbit of $x$ will enter both $M$ and $N$. Let $m$ and $n$ be the least positive integers such that $f^{m}(x)\in M$ and $f^{n}(x)\in N$. Assume $m<n$ and take $k=n-m$. Then, obviously $f^{k}(M)\cap N\neq \varnothing$ and hence $f^{k}(M)\cap N\notin\mathcal{\bf I}$, since $f$ is open and $\mathcal{\bf I}$ is a completely codense ideal. Thus, $f$ is an $\mathcal{\bf I}$-transitive.\\ This completes the proof.
\end{proof}

\begin{corollary}
\label{cor.tit.4.3}
   Let $\mathcal{\bf I}=\mathcal{N}$ be an  ideal of nowhere dense subsets of a compact and Hausdorff space $X$  with a countable base and $f:X\rightarrow X$ be a continuous and open mapping. Then, $f$ is an $\mathcal{\bf I}$-transitive if and only if $\Omega_\mathcal{\bf I}(f)=X$ and $f$ has  $\mathcal{\bf I}$-dense(respectively, $*$-dense)orbit.
\end{corollary}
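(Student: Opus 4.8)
The plan is to recognize this corollary as an immediate specialization of Proposition~\ref{prop.tit.04}. The only gap to bridge is verifying that the ideal $\mathcal{N}$ of all nowhere dense subsets of $X$ qualifies as a \emph{completely codense} ideal; once that is in place, every other hypothesis (compact Hausdorff $X$ with a countable base, $f$ continuous and open) matches Proposition~\ref{prop.tit.04} verbatim, and the conclusion transfers directly.

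First I would confirm that $\mathcal{N}$ is genuinely an ideal on $X$. This needs the hereditary property and finite additivity: any subset of a nowhere dense set is nowhere dense, and a finite union of nowhere dense sets is again nowhere dense. Both are standard facts about nowhere dense sets, so $\mathcal{N}\subseteq 2^{X}$ is closed under the two defining operations and is therefore an ideal.

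Next I would invoke the characterization recorded in Section~\ref{sec2}, namely that an ideal $\mathcal{\bf I}$ is completely codense on $(X,\tau)$ if and only if each member of $\mathcal{\bf I}$ is nowhere dense \cite{DGR1999}. Since every element of $\mathcal{N}$ is by definition nowhere dense, this criterion is satisfied trivially, and hence $\mathcal{N}$ is completely codense.

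Having established that $\mathcal{\bf I}=\mathcal{N}$ is a completely codense ideal on the compact Hausdorff space $X$ with a countable base, and that $f$ is continuous and open, I would simply apply Proposition~\ref{prop.tit.04} with this particular $\mathcal{\bf I}$. That proposition yields exactly the stated equivalence: $f$ is $\mathcal{\bf I}$-transitive if and only if $\Omega_{\mathcal{\bf I}}(f)=X$ and $f$ has an $\mathcal{\bf I}$-dense (respectively, $*$-dense) orbit. There is no real obstacle here; the whole content of the argument lies in the one-line observation that $\mathcal{N}$ is completely codense, after which the corollary is a direct instance of the preceding proposition.
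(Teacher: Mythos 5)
Your proposal is correct and matches the paper's (implicit) argument: the corollary is stated without proof precisely because, by the criterion recalled in Section~\ref{sec2} that an ideal is completely codense if and only if each of its members is nowhere dense, the ideal $\mathcal{N}$ is completely codense and Proposition~\ref{prop.tit.04} applies verbatim. Your additional check that $\mathcal{N}$ is indeed an ideal is a harmless and correct supplement.
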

 We are ending this write up with some further scope of this research. One can define $\mathcal{\bf I}$-transitive through generalized open sets viz semi-open, preopen set, $b$-open set and $\beta$-open set etc.  by the following way: A dynamical system $(X, f)$ is called  $K$-topologically ideal transitive (or simply $K$-$\mathcal{\bf I}$-transitive) if for every pair of non-empty $K$-open sets $A$ and $B$, there exists a positive integer $n$ such that $f^{n}(A)\cap B\notin\mathcal{\bf I}$. In this case, $K$ may be the semi-open set \cite{NL1963}, preopen set \cite{AMN1982}, $b$-open set \cite{DA1996}, $\beta$-open set \cite{AEM1983}. As we know open set implies semi-open set, then semi $\mathcal{\bf I}$-transitive $\implies$  $\mathcal{\bf I}$-transitive and thus, we conclude the  followings:\\

${\beta-\mathcal{\bf I}-\text{transitive}}\Rightarrow {b-\mathcal{\bf I}-\text{transitive}}\Rightarrow {\text{Semi}-\mathcal{\bf I}-\text{transitive}}
 \Rightarrow {{\bf I}-\text{transitive}}\Rightarrow \\
 \text{Topologically  transitive}$, and \\
 ${\beta-\mathcal{\bf I}-\text{transitive}}\Rightarrow {b-\mathcal{\bf I}-\text{transitive}}\Rightarrow {\text{Pre}-\mathcal{\bf I}-\text{transitive}}
 \Rightarrow {{\bf I}-\text{transitive}}\Rightarrow \\
 \text{Topologically  transitive}$\\

However, reverse inclusion of the above relation may not be true in general. For these, consider the following examples:

\begin{example}
  \label{exm.ncf.08}
  Suppose $X$ is an infinite set endowed with the indiscrete topology $\tau$ and $\mathcal{\bf I}$ is a proper ideal on $X$. Now $SO(X,\tau) = \{\varnothing, X\}$ and $BO(X, \tau) = \wp(X)$, the power set of $X$. Let us define a mapping $f:X \rightarrow X$ by $f(x)=x$, for all $x\in X$. Then, $f$ is semi-$\mathcal{\bf I}$-transitive but not $b$-$\mathcal{\bf I}$-transitive.
\end{example}

\begin{example}
  \label{exm.ncf.09}
  Suppose $X=\{a, b, c\}$ is set endowed with the  topology $\tau=\{\varnothing, X, \{a\},$ $ \{b\}, \{a, b\}\}$ and $\mathcal{\bf I}=\{\varnothing,\{c\}\}$. Now, $PO(X, \tau) = \{\varnothing, X, \{a\}, \{b\}, \{c\}, \{a, b\},\{a, c\},\{b, c\}\}$. Let us define a mapping $f:X \rightarrow X$ by $f(a)=b$, $f(b)=a$ and  $f(c)=c$. Then, $f$ is $\mathcal{\bf I}$-transitive but not Pre-$\mathcal{\bf I}$-transitive.
\end{example}

\begin{example}
  \label{exm.ncf.10}
  Suppose $X=\{a, b, c\}$ is set endowed with the  topology $\tau=\{\varnothing, X, \{a\},$ $ \{b\}, \{a, b\}\}$ and $\mathcal{\bf I}=\{\varnothing,\{c\}\}$. Now $PO(X, \tau) = \{\varnothing, X, \{a\}, \{b\}, \{a, b\}\}$ and $BO(X, \tau) = \{\varnothing, X, \{a\}, \{b\}, \{a, b\}, \{a, c\}, \{b, c\}\}$. Let us define a mapping $f:X \rightarrow X$ by $f(a)=b$, $f(b)=a$ and  $f(c)=c$. Then, $f$ is Pre-$\mathcal{\bf I}$-transitive but not $b$-$\mathcal{\bf I}$-transitive.
\end{example}

\begin{example}
  \label{exm.ncf.11}
  Suppose $X=\{a, b, c, d\}$ is set endowed with the  topology $\tau=\{\varnothing, X, \{a\},$ $ \{c\}, \{a, c\}, \{a, c, d\}\}$ and $\mathcal{\bf I}=\{\varnothing,\{b\}\}$. Now, $SO(X, \tau) = \{\varnothing, X, \{a\}, \{c\}, \{a, c\},  \{a, b\}, \{a, d\}, $ $\{b, c\}, \{c, d\}, \{a, b, c\}, \{a, b, d\}, \{a, c, d\}, \{b, c, d\}\}$. Let us define a mapping $f:X \rightarrow X$ by $f(a)=c$, $f(b)=b$, $f(c)=a$ and  $f(d)=d$. Then, $f$ is $\mathcal{\bf I}$-transitive but not Semi-$\mathcal{\bf I}$-transitive.
\end{example}

  \section*{Conclusion}
   Through this paper, we have introduced  equivalent definition of topologically transitive and a new type of transitivity called topologically ideal transitive (or $\mathcal{\bf I}$-transitive). These are  related to earlier transitivities. Topological ideal transitivity implies topologically transitive. But reverse may not be true. Our several transitivity measures weaker as well as strong transitivity of the earlier transitivities.

\end{document}